\newtheorem{theorem}{Theorem}[section]
\newtheorem{lemma}[theorem]{Lemma}
\newtheorem{proposition}[theorem]{Proposition}
\theoremstyle{definition}
\newtheorem{definition}[theorem]{Definition}
\newtheorem{example}[theorem]{Example}
\theoremstyle{remark}
\numberwithin{equation}{section}
\begin{document}

\title[Uniqueness of Hardy--H\'enon parabolic equations on Herz spaces]{Unconditional uniqueness of Hardy--H\'enon parabolic equations on Herz spaces}

\author{Naoya Hatano and Masahiro Ikeda}

\address[Naoya Hatano]{Graduate School of Information Science and Technology, The University of Osaka, 1-5, Yamadaoka, Suita-shi, Osaka 565-0871, Japan,}

\address[Masahiro Ikeda]{Graduate School of Information Science and Technology, The University of Osaka, 1-5, Yamadaoka, Suita-shi, Osaka 565-0871, Japan / Center for Advanced
Intelligence Project RIKEN, 1-4-1, Nihonbashi, Chuo-ku, Tokyo 103-0027, Japan.}

\email[Naoya Hatano]{n.hatano.chuo@gmail.com}

\email[Masahiro Ikeda]{ikeda@ist.osaka-u.ac.jp / masahiro.ikeda@a.riken.jp}

\begin{abstract}
In this paper, we introduce the unconditional uniqueness of solutions in Herz spaces for the Hardy--H\'enon parabolic equation, which is a semilinear heat equation with a power-type weight in the nonlinear term $|x|^\gamma|u|^{\alpha-1}u$.
It is expected that the power-type weight in the nonlinear term can be effectively handled within Herz spaces.
In fact, our result in Herz spaces $\dot{K}^s_{q,r}({\mathbb R}^n)$ relaxes the endpoint case $q=\alpha$ and the large interpolation exponent case $r\ge q$ compared to previous results.
\end{abstract}

\keywords{
Hardy--H\'enon parabolic equation,
Uniqueness,
Herz spaces.
}

\subjclass[2020]{Primary 35K08; Secondary 42B35}
\maketitle

\section{Introduction}

In this paper, we consider the Hardy–H\'enon parabolic equation
\begin{equation}\label{eq:HH}
\begin{cases}
\partial_t u-\Delta u
=
|x|^\gamma|u|^{\alpha-1}u,
& (t,x)\in(0,\infty)\times{\mathbb R}^n, \\
u(0)=u_0,
\end{cases}
\tag{HH}
\end{equation}
where $u_0$ is a given function, $\alpha>1$, and $\gamma\in{\mathbb R}$.
This equation is known as the Hardy--H\'enon parabolic equation, and the corresponding stationary problem was introduced and studied the rotating stellar systems by H\'enon in \cite{Henon73}.

In the case $\gamma=0$, which is the case of the Fujita-type semilinear heat equation, the existence and uniqueness of solutions in Lebesgue spaces to equation \eqref{eq:HH} have been studied by many authors (see, e.g., \cite{BrCa96,Weissler80,Weissler81} and the textbook \cite{GGS10,QuSo19}).
Moreover, the existence and uniqueness of solutions in Herz spaces were established by Drihem in \cite{Drihem23}.

When $-\min(2,n)<\gamma<0$, the well-posedness of equation \eqref{eq:HH} in Lebesgue spaces was shown by Ben Slimene, Tayachi, and Weissler in \cite{BTW17}, in Besov spaces by Chikami in \cite{Chikami19}, and in Sobolev spaces by Chikami, Ikeda and Taniguchi in \cite{CIT21}.
The unconditional uniqueness of solutions in Lorentz spaces was established by Tayachi in \cite{Tayachi20}.

When $\gamma>-\min(2,n)$, the existence of local and global bounded continuous positive solutions for continuous initial data was established by Wang in \cite{Wang93}.
The well-posedness of equation \eqref{eq:HH} in Lebesgue spaces with power weight was studied by Chikami, Ikeda, and Taniguchi in \cite{CIT22}, and in Lorentz spaces with power weight by Chikami, Ikeda, Taniguchi, and Tayachi in \cite{CITT25}.
The unconditional uniqueness of solutions in Lorentz spaces with power weight was established by Chikami, Ikeda, Taniguchi, and Tayachi in \cite{CITT24}.

Additionally, Yomgne \cite{Yomgne22} studied the equation \eqref{eq:HH} with the term $-\Delta u$ replaced by $(-\Delta)^mu$ for $m \in(0,1)\cup{\mathbb N}$, and established well-posedness in weak Lebesgue spaces as well as unconditional uniqueness in Lebesgue spaces.
Tsutsui \cite{Tsutsui25} considered the equation \eqref{eq:HH} with the nonlinear term $|x|^\gamma|u|^{\alpha-1}u$ replaced by $V|u|^{\alpha-1}u$ for a general weight function $V$, and proved the existence and unconditional uniqueness of solutions in weighted Lebesgue spaces.

By Duhamel's principle, the corresponding integral equation can be given by
\begin{equation}
u(t)
=
e^{t\Delta}u_0
+
\int_0^t
e^{(t-\tau)\Delta}
\left[
|\cdot|^\gamma|u(\tau)|^{\alpha-1}u(\tau)
\right]
\,{\rm d}\tau,
\end{equation}
where $\{e^{t\Delta}\}_{t>0}$ denotes the heat semigroup, defined by
\begin{equation}
e^{t\Delta}f(x)
\equiv
\frac1{\sqrt{(4\pi t)^n}}
\int_{{\mathbb R}^n}
\exp\left(-\frac{|x-y|^2}{4t}\right)
f(y)
\,{\rm d}y,
\quad
(t,x)\in(0,\infty)\times{\mathbb R}^n.
\end{equation}

The critical exponents $q_c$ and $Q_c$ are given by
\[
q_c
=
\frac{n(\alpha-1)}{2+\gamma},
\quad
Q_c
=
\frac{n\alpha}{n+\gamma}.
\]
In particular, the exponent $q_c$ is called the scale-critical exponent.
Indeed, the equation \eqref{eq:HH} is invariant under the scaling transformation
\[
u_\lambda(t,x)
=
\lambda^{\frac{2+\gamma}{\alpha-1}}
u(\lambda^2t,\lambda x),
\quad
\lambda>0,
\]
and the necessary and sufficient condition for the scaling invariance
\[
\|u_\lambda(0)\|_{\dot{K}^s_{q,r}}
\sim
\|u_0\|_{\dot{K}^s_{q,r}},
\quad
\lambda>0,
\]
is
\[
\frac sn+\frac1q=\frac1{q_c}.
\]
Accordingly, we say that the problem \eqref{eq:HH} is scale-critical if $s/n+1/q=1/q_c$, scale-subcritical if $s/n+1/q<1/q_c$, and scale-supercritical if $s/n+1/q>1/q_c$.
Additionally, the exponent $Q_c$ related to the well-definedness of the nonlinear term $|x|^\gamma|u|^{\alpha-1}u$ in $\dot{K}^s_{q,r}({\mathbb R}^n)$.
In fact, according to Proposition \ref{prop:well-defined} (2), below, we have $|x|^\gamma|u|^{\alpha-1}u\in L_{\rm loc}^1({\mathbb R}^n)$ for every $u\in\dot{K}^s_{q,r}({\mathbb R}^n)$ if and only if either
\[
\text{\lq\lq
$
\frac sn+\frac1q
<
\frac1{Q_c}
$
''}
\quad \text{or} \quad
\text{\lq\lq
$
\frac sn+\frac1q
=
\frac1{Q_c}
$
and
$r\le\alpha$
''}.
\]
To this end, we consider the following three critical cases.
\begin{itemize}
\item Double subcritical case:
$
s/n+1/q<\min(1/q_c,1/Q_c)
$.

\item Single critical case I:
$
s/n+1/q=1/Q_c<1/q_c
$.

\item Single critical case II:
$
s/n+1/q=1/q_c<1/Q_c
$.

\item Double critical case:
$
s/n+1/q=1/q_c=1/Q_c
$.
\end{itemize}

Throughout the main results of this paper, we always assume the following conditions
\begin{equation}\label{eq:Assum}
\begin{cases}
\gamma>-\min(2,n),
\quad
\alpha\ge\max\left(1,1+\dfrac\gamma n\right), \\
\dfrac\gamma{\alpha-1}\le s\le n,
\quad
\alpha\le q\le\infty,
\quad
0<r\le\infty
\end{cases}
\tag{$\ast$}
\end{equation}
and the exponent conditions in Proposition \ref{prop:well-defined}, below.

Under these assumptions, we obtain the following main results.

\begin{theorem}\label{main1}
If either
\begin{itemize}
\item[{\rm (i)}] $
s/n+1/q<\min(1/q_c,1/Q_c)
$,
\quad or

\item[{\rm (ii)}] $
s/n+1/q=1/Q_c<1/q_c
$
and
$r\le\alpha$,
\end{itemize}
then the solution of \eqref{eq:HH} in
$
L^\infty(
0,T\,;
\dot{K}^s_{q,r}({\mathbb R}^n)
)
$
is unique.
\end{theorem}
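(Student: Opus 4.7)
The plan is to run a (singular) Gronwall argument on the difference $w = u - v$ of two hypothetical mild solutions of \eqref{eq:HH} sharing the same initial datum $u_0$. Setting $M := \|u\|_{L^\infty(0,T;\dot{K}^s_{q,r})} + \|v\|_{L^\infty(0,T;\dot{K}^s_{q,r})}$ and subtracting the two Duhamel identities gives
\[
w(t) = \int_0^t e^{(t-\tau)\Delta}\Bigl[|x|^\gamma\bigl(|u|^{\alpha-1}u - |v|^{\alpha-1}v\bigr)(\tau)\Bigr]\,d\tau,
\]
and the pointwise inequality $\bigl||u|^{\alpha-1}u - |v|^{\alpha-1}v\bigr| \le \alpha\bigl(|u|^{\alpha-1}+|v|^{\alpha-1}\bigr)|w|$ reduces uniqueness to controlling $w$ in terms of itself, weighted by $|x|^\gamma$ and by $(\alpha-1)$-powers of the bounded solutions.

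The next step is a H\"older-type nonlinear estimate in Herz spaces. Using that multiplication by $|x|^\gamma$ shifts the homogeneity index and that $q \ge \alpha$ from \eqref{eq:Assum} supplies the integrability headroom needed to absorb $|u|^{\alpha-1}+|v|^{\alpha-1}$, I would fix an auxiliary target triple $(\sigma,p,\rho)$ dictated by scaling so that
\[
\bigl\||x|^\gamma\bigl(|u|^{\alpha-1}+|v|^{\alpha-1}\bigr)w\bigr\|_{\dot{K}^{\sigma}_{p,\rho}} \lesssim M^{\alpha-1}\,\|w\|_{\dot{K}^s_{q,r}},
\]
while the heat semigroup provides smoothing
\[
\|e^{t\Delta}g\|_{\dot{K}^s_{q,r}} \lesssim t^{-\beta}\,\|g\|_{\dot{K}^{\sigma}_{p,\rho}},
\]
with a time exponent $\beta = \beta(n,\gamma,\alpha,s,q)$ fixed by scaling. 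Combined in the Duhamel formula, these yield a weakly singular integral inequality
\[
\|w(t)\|_{\dot{K}^s_{q,r}} \le C M^{\alpha-1} \int_0^t (t-\tau)^{-\beta}\|w(\tau)\|_{\dot{K}^s_{q,r}}\,d\tau.
\]
In the double subcritical regime (i) the scaling gap forces $\beta < 1$, so the Henry/singular-Gronwall lemma closes the estimate and forces $w \equiv 0$ on $[0,T]$.

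The genuine obstacle is case~(ii), which is scaling-critical for the nonlinearity and gives $\beta = 1$, a value at which Gronwall does not close. Here I would transfer the estimate to a scale-invariant auxiliary norm $N_T(w) := \sup_{0<t<T} t^{\delta}\|w(t)\|_{Y}$ with $(Y,\delta)$ dictated by the critical scaling, exploiting the hypothesis $r \le \alpha$ to perform the borderline H\"older step in the Herz scale (this is precisely the condition in Proposition~\ref{prop:well-defined} that legalises the endpoint embedding $\dot{K}^s_{q,r} \hookrightarrow L^{Q_c}_{\rm loc}$ needed to interpret the nonlinearity). The resulting inequality reads $N_T(w) \le \varepsilon(T)\, N_T(w)$ with a prefactor $\varepsilon(T)$ that is finite for each $T$ and tends to $0$ as $T \to 0^+$ by dominated convergence on the time integral (using $w(0)=0$ and the boundedness of $u,v$); this forces $w \equiv 0$ on a short initial interval, and iteration in time propagates the vanishing to all of $[0,T]$. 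The main bookkeeping difficulty throughout is to choose $(\sigma,p,\rho)$, and in case~(ii) also $(Y,\delta)$, so that the nonlinear estimate and the heat kernel estimate jointly respect the Herz-Hölder constraints at the endpoint $q = \alpha$ where $r \le \alpha$ is indispensable.
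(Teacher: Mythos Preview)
Your treatment of case~(i) is essentially the paper's: subtract the Duhamel formulas, use the pointwise Lipschitz bound on $|u|^{\alpha-1}u$, push the nonlinearity into $\dot{K}^{\sigma}_{q/\alpha,r_0}$ with $\sigma=\alpha s-\gamma$ via the H\"older and power rules in Herz spaces, apply the heat smoothing estimate, and close with the singular Gronwall inequality. Good.

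Your handling of case~(ii), however, rests on a misdiagnosis. You write that case~(ii) ``gives $\beta=1$, a value at which Gronwall does not close.'' That is false. The time exponent in the Duhamel integral is
\[
\beta \;=\; \frac{n}{2}\Bigl[\Bigl(\frac{\sigma}{n}+\frac{\alpha}{q}\Bigr)-\Bigl(\frac{s}{n}+\frac{1}{q}\Bigr)\Bigr],
\]
and a short computation (done at the start of Section~\ref{s:proof}) shows $\beta<1$ is equivalent to $s/n+1/q<1/q_c$, which holds in \emph{both} cases~(i) and~(ii). The index $Q_c$ has nothing to do with the time singularity; it governs the \emph{space} endpoint $\sigma/n+\alpha/q\le 1$ needed for the heat estimate of Proposition~\ref{prop:LE-Herz2} to map $\dot{K}^{\sigma}_{q/\alpha,r_0}$ into $\dot{K}^s_{q,r}$. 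In case~(ii) this becomes an equality, and the role of the hypothesis $r\le\alpha$ is precisely to allow $r_0=r/\alpha\le 1$ so that the endpoint clause~(3) of Proposition~\ref{prop:LE-Herz2} applies. Once that is arranged, the same Gronwall argument as in case~(i) closes immediately; the paper's proof treats (i) and (ii) uniformly in five lines.

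Your proposed workaround for case~(ii) --- an auxiliary norm $N_T(w)=\sup_t t^{\delta}\|w(t)\|_Y$ with a prefactor $\varepsilon(T)\to 0$ --- is therefore unnecessary, and as stated it would not work anyway: Theorem~\ref{main1} assumes only $u,v\in L^\infty(0,T;\dot{K}^s_{q,r})$, with no continuity at $t=0$, so there is no mechanism to make the prefactor small (that device belongs to the scale-critical Theorem~\ref{main2}, where continuity is assumed).
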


\begin{theorem}\label{main2}
Assume that
\[
n\ge3,
\quad
\frac\gamma{\alpha-1}<s,
\quad
\frac sn+\frac1q>0.
\]
If either
\begin{itemize}
\item[{\rm (i)}] $
s/n+1/q=1/q_c<1/Q_c
$,
$q<\infty$
and
$r=\infty$,
\quad or

\item[{\rm (ii)}] $
s/n+1/q=1/q_c=1/Q_c
$,
$
\begin{cases}
\text{
$q<\infty$
and
$r\le\alpha-1$
}, \\
\text{
$q\le\infty$
and
$r\le\min(1,\alpha-1)$
},
\end{cases}
$
\end{itemize}
then, for any initial data
$u_0\in\dot{\mathcal K}^s_{q,r}({\mathbb R}^n)$,
the solution of \eqref{eq:HH} in
$
C(
[0,T]\,;
\dot{\mathcal K}^s_{q,r}({\mathbb R}^n)
)
$
is unique, where
$
\dot{\mathcal K}^s_{q,r}({\mathbb R}^n)
$
is the closure of the space of all smooth functions with compact support
$
C_{\rm c}^\infty({\mathbb R}^n)
$
in $\dot{K}^s_{q,r}({\mathbb R}^n)$.
\end{theorem}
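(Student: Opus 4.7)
The plan is to show that the difference $w = u_1 - u_2$ of two putative solutions in $C([0,T];\dot{\mathcal{K}}^s_{q,r}(\mathbb{R}^n))$ with common initial datum $u_0$ satisfies a scale-invariant integral inequality whose prefactor can be made small on a short initial interval, thereby forcing $w\equiv 0$ there and, by a restart argument, on all of $[0,T]$. Starting from Duhamel's formula
\[
w(t) = \int_0^t e^{(t-\tau)\Delta}\bigl[|\cdot|^\gamma\bigl(|u_1(\tau)|^{\alpha-1}u_1(\tau)-|u_2(\tau)|^{\alpha-1}u_2(\tau)\bigr)\bigr]\,{\rm d}\tau
\]
together with the pointwise bound $\bigl||a|^{\alpha-1}a-|b|^{\alpha-1}b\bigr|\le\alpha(|a|^{\alpha-1}+|b|^{\alpha-1})|a-b|$, I would introduce an auxiliary Kato-type norm $X$ of the critical scaling, estimate the weighted nonlinearity by a H\"older-type product inequality in Herz spaces against $|x|^\gamma$, and apply the heat-semigroup smoothing to produce a bound of the schematic form
\[
\|w\|_{Y(0,\tau_0)} \le C\bigl(\|u_1\|_{X(0,\tau_0)}^{\alpha-1}+\|u_2\|_{X(0,\tau_0)}^{\alpha-1}\bigr)\|w\|_{Y(0,\tau_0)}.
\]

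The crucial input that distinguishes this theorem from Theorem \ref{main1} is the hypothesis $u_0\in\dot{\mathcal{K}}^s_{q,r}$ combined with strong continuity in time, which lets me extract smallness of $\|u_i\|_{X(0,\tau_0)}$ as $\tau_0\to 0$. Given $\varepsilon>0$, I would decompose $u_0=\varphi_\varepsilon+\psi_\varepsilon$ with $\varphi_\varepsilon\in C_{\rm c}^\infty(\mathbb{R}^n)$ and $\|\psi_\varepsilon\|_{\dot K^s_{q,r}}<\varepsilon$, and then split
\[
u_i(t) = e^{t\Delta}\varphi_\varepsilon + e^{t\Delta}\psi_\varepsilon + \bigl(u_i(t)-e^{t\Delta}u_0\bigr);
\]
the first summand tends to $0$ in the auxiliary critical norm as $\tau_0\to 0$ by parabolic smoothing applied to smooth compactly supported data, the second is uniformly controlled by $\varepsilon$ via the boundedness of $\{e^{t\Delta}\}$ on Herz spaces, and the third is small on $[0,\tau_0]$ thanks to the strong continuity of $u_i$ at $t=0$. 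Choosing $\varepsilon$ and then $\tau_0$ small enough renders the above prefactor at most $1/2$, forcing $w\equiv 0$ on $[0,\tau_0]$; restarting with $u_i(\tau_0)\in\dot{\mathcal{K}}^s_{q,r}$ as the new initial datum and iterating covers the full interval $[0,T]$ in finitely many steps.

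The main obstacle I anticipate is the design of the auxiliary pair $(X,Y)$: it must be scale-invariant under the Hardy--H\'enon scaling $u\mapsto\lambda^{(2+\gamma)/(\alpha-1)}u(\lambda^2\cdot,\lambda\cdot)$, support a H\"older product inequality against $|x|^\gamma$ in the Herz scale with an $(\alpha-1)+1$-fold product decomposition, and admit integrable time-smoothing estimates for $e^{\tau\Delta}$. The secondary-exponent conditions $r=\infty$ in (i) and $r\le\alpha-1$ (respectively $r\le\min(1,\alpha-1)$) in (ii) are precisely what is needed for a quasi-triangle inequality to survive this product decomposition across Herz annuli. The double-critical regime (ii) is the most delicate, because $1/q_c$ and $1/Q_c$ must be saturated simultaneously, which narrows the admissible $(X,Y)$ and requires extra care in balancing the H\"older exponents against the time integrability of the heat-smoothing factor.
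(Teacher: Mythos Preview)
Your overall architecture---contract the difference $w$ on a short interval via smallness of the prefactor, then restart---matches the paper. The three-piece splitting of $u_i(t)$ and the restart/continuation argument are also essentially the same. However, there is a genuine technical gap in how you propose to close the nonlinear estimate, and it is exactly the point that separates the scale-critical Theorem~\ref{main2} from the subcritical Theorem~\ref{main1}.

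You speak of choosing an auxiliary pair $(X,Y)$ that ``admit[s] integrable time-smoothing estimates for $e^{\tau\Delta}$''. At the scale-critical exponent $s/n+1/q=1/q_c$ one has, with $\sigma=\alpha s-\gamma$,
\[
\frac n2\Bigl[\Bigl(\frac\sigma n+\frac\alpha q\Bigr)-\Bigl(\frac sn+\frac1q\Bigr)\Bigr]=1,
\]
so a direct application of the $\dot K^{\sigma}_{q/\alpha}$-$\dot K^s_q$ heat estimate produces the divergent kernel $(t-\tau)^{-1}$. No choice of $X=L^\infty_t\dot K^s_{q,r}$ makes this integrable. If instead you take $X$ to be a genuine Kato space $\sup_\tau\tau^\beta\|\cdot\|_{\dot K^s_{\tilde q,\infty}}$ with $\tilde q>q$ (so that the time exponent becomes integrable), then your claim that ``the third [piece $u_i-e^{t\Delta}u_0$] is small on $[0,\tau_0]$ thanks to the strong continuity of $u_i$'' is no longer justified: strong continuity in $\dot{\mathcal K}^s_{q,r}$ says nothing about smallness in the auxiliary time-weighted norm, and bootstrapping $Nu_i$ into $X$ would again face the $(t-\tau)^{-1}$ obstruction.

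The paper resolves this by a \emph{hybrid} scheme rather than a single Kato norm. It splits the $(\alpha-1)$-power factor as $|u_i-e^{\tau\Delta}u_0|^{\alpha-1}$ plus $|e^{\tau\Delta}u_0|^{\alpha-1}$ and treats the two pieces with different tools. For the piece involving $u_i-e^{\tau\Delta}u_0$, it stays in the original Herz scale and invokes the Meyer-type inequality (Proposition~\ref{prop:Meyer-Herz}), which bypasses the divergent time integral by real interpolation between two off-critical heat estimates; smallness then comes exactly from strong continuity (your argument). For the piece involving $e^{\tau\Delta}u_0$, it does use an auxiliary $\tilde q>q$ and the Kato-type quantity $\sup_\tau\tau^\beta\|e^{\tau\Delta}u_0\|_{\dot K^s_{\tilde q,\infty}}$, whose smallness follows from the density hypothesis $u_0\in\dot{\mathcal K}^s_{q,r}$ (Lemma~\ref{lem1}). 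Your proposal would become complete once you insert Meyer's inequality for the first piece; without it, neither branch of your $(X,Y)$ dichotomy closes.
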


Note that the necessary and sufficient condition for $C_{\rm c}^\infty({\mathbb R}^n)$ to be dense in $\dot{K}^s_{q,r}({\mathbb R}^n)$ is $q,r<\infty$ (see Proposition \ref{prop:dense}).

In the cases of the Lorentz spaces $L_s^{q,r}({\mathbb R}^n)$, the usual endpoint restrictions $q=\alpha$ or $q=\infty$ can be relaxed by working instead with the Herz spaces $\dot{K}^s_{q,r}({\mathbb R}^n)$.

As a key proposition for the proof of Theorem 1.2, which is the scale critical case, we use the Meyer's inequality (see Proposition \ref{prop:Meyer-Herz} below).
We use the following extention of the Meyer's inequality, where the original Meyer's inequality can be seen in \cite{Terraneo02}.
It is known that if
\[
\frac1q=\frac1p-\frac2n,
\]
then
\[
\left\|
\int_0^t
e^{(t-\tau)\Delta}f(\tau)
\,{\rm d}\tau
\right\|_{L^{q,\infty}}
\lesssim
\sup_{0<\tau<t}
\|f(\tau)\|_{L^{p,\infty}},
\]
where $L^{p,\infty}({\mathbb R}^n)$ and $L^{q,\infty}({\mathbb R}^n)$ are weak Lebesgue spaces.
This inequality is called Meyer's inequality, and Meyer \cite{Meyer97} showed the uniqueness and existence of the solution to the Navier--Stokes equation in
$
L^{3,\infty}_\sigma({\mathbb R}^3)
=
\{
f\in L^{3,\infty}({\mathbb R}^3)
\,:\,
{\rm div}f=0
\}
$
using this inequality.
After that Terraneo \cite{Terraneo02} applied Meyer's inequality to the Fujita-type semilinear heat equation, and investigate the non-uniquenesss of the solutions.
Additionally, Chikami et al. \cite{CITT24} and Tsutsui \cite{Tsutsui11} gave the generalization for Meyer's inequality on the weak Lebesgue spaces with power weight and weak Herz spaces, respectively.

We organize the remainder of the paper as follows:
In Section \ref{s:Herz}, we give the precise definition of Herz spaces and their basic properties.
In Section \ref{s:LE}, we extend the classical heat semigroup estimates and Meyer's inequalities to Herz spaces.
In Section \ref{s:proof}, we provides the proof of the main theorem.
In Section \ref{s:rem}, we compare Herz spaces and Lorentz spaces with power weights, in order to contrast our results with previous ones.

\section{Herz spaces}\label{s:Herz}

In this section, in order to suitably control the nonlinear term in equation \eqref{eq:HH}, we give the precise definition of Herz spaces and some basic properties of these function spaces.

\begin{definition}
Let $s\in{\mathbb R}$ and $0<q,r\le\infty$.
Define the (homogeneous) Herz space $\dot{K}^s_{q,r}({\mathbb R}^n)$ by the space of all measurable functions $f$ with the finite quasi-norm
\[
\|f\|_{\dot{K}^s_{q,r}}
\equiv
\begin{cases}
\displaystyle
\left(
\sum_{j\in{\mathbb Z}}
\left[
2^{js}
\|f\chi_{A_j}\|_{L^q}
\right]^r
\right)^{\frac1r},
& r<\infty, \\
\displaystyle
\sup_{j\in{\mathbb Z}}
2^{js}
\|f\chi_{A_j}\|_{L^q},
& r=\infty,
\end{cases}
\]
where
$
A_j
=
\{
x\in{\mathbb R}^n
\,:\,
2^{j-1}\le|x|<2^j
\}
$.
\end{definition}

Related to the assumption of the main results, we give the following proposition.

\begin{proposition}\label{prop:well-defined}
Let $s\in{\mathbb R}$ and $0<q,r\le\infty$.
Then the following assertions hold.
\begin{itemize}
\item[{\rm (1)}] The inclusion $
\dot{K}^s_{q,r}({\mathbb R}^n)
\supset
C_{\rm c}^\infty({\mathbb R}^n)
$
holds if and only if either
\begin{itemize}
\item[{\rm (i)}] $s/n+1/q>0$
\quad or

\item[{\rm (ii)}] $s/n+1/q=0$
and
$r=\infty$.
\end{itemize}

\item[{\rm (2)}] The inclusion $
\dot{K}^s_{q,r}({\mathbb R}^n)
\subset
L_{\rm loc}^1({\mathbb R}^n)
$
holds if and only if either
\begin{itemize}
\item[{\rm (i)}] $s/n+1/q<1$
\quad or

\item[{\rm (ii)}] $s/n+1/q=1$
and
$r\le1$.
\end{itemize}
\end{itemize}
\end{proposition}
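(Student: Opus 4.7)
Both inclusions are purely local at the origin, and using the dyadic annular decomposition $\{A_j\}$ built into the definition of $\dot K^s_{q,r}$ reduces each statement to the convergence of a one-sided geometric series in $j\to-\infty$, combined with an $\ell^r$ embedding or H\"older step.

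\textbf{Part (1).} For sufficiency, fix $\varphi\in C_{\rm c}^\infty({\mathbb R}^n)$ supported in $B(0,R)$ with $\|\varphi\|_\infty\le M$. Then $\|\varphi\chi_{A_j}\|_{L^q}\le M\,|A_j|^{1/q}\sim 2^{jn/q}$ and $\varphi\chi_{A_j}\equiv 0$ for $j$ large, so that for some $j_0=j_0(R)$,
\[
\|\varphi\|_{\dot K^s_{q,r}}^{r}\lesssim\sum_{j\le j_0}2^{jr(s+n/q)}
\]
(with the obvious $\sup$ replacement when $r=\infty$), which is finite exactly under (i) or (ii). For necessity, choose $\varphi\in C_{\rm c}^\infty$ with $\varphi\ge c>0$ on a neighborhood of the origin; the matching lower bound $\|\varphi\chi_{A_j}\|_{L^q}\gtrsim 2^{jn/q}$ for all sufficiently small $j$ produces the same geometric series and thereby forces the stated conditions.

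\textbf{Part (2), sufficiency.} On each annulus (for $q\ge 1$) H\"older gives $\int_{A_j}|f|\le|A_j|^{1-1/q}\|f\chi_{A_j}\|_{L^q}\sim 2^{jn(1-1/q)}\|f\chi_{A_j}\|_{L^q}$, whence
\[
\int_{B(0,R)}|f|\lesssim\sum_{j\le j_R}2^{j[n(1-1/q)-s]}\bigl(2^{js}\|f\chi_{A_j}\|_{L^q}\bigr).
\]
In the subcritical regime $n(1-1/q)-s>0$ (alternative (i)), H\"older in $\ell^r$ for $r\ge 1$, or the embedding $\ell^r\hookrightarrow\ell^1$ for $0<r\le 1$, applied against the geometrically summable weight, bounds the right-hand side by $C(R)\|f\|_{\dot K^s_{q,r}}$. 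In the critical case (ii) the weight is identically $1$ and the bound reduces to $\sum_{j\le j_R}2^{js}\|f\chi_{A_j}\|_{L^q}$, which is controlled by $\|f\|_{\dot K^s_{q,r}}$ precisely when $r\le 1$ via $\ell^r\hookrightarrow\ell^1$.

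\textbf{Part (2), necessity.} For each forbidden regime I exhibit a counterexample of the form $f=\sum_{j\le 0}a_j\,|A_j|^{-1/q}\chi_{A_j}$ with $a_j\ge 0$, for which $\|f\|_{\dot K^s_{q,r}}\sim\|(2^{js}a_j)\|_{\ell^r}$ while $\int_{B(0,1)}|f|\sim\sum_{j\le 0}2^{jn(1-1/q)}a_j$. In the strictly supercritical regime $s/n+1/q>1$ the choice $a_j=2^{-js}(|j|+1)^{-\sigma}$ with $\sigma>1/r$ keeps the norm finite while the integral sum diverges geometrically. The delicate case, and the main technical obstacle, is the borderline $s/n+1/q=1$ with $1<r\le\infty$: for $r<\infty$ set $a_{-k}=2^{ks}(k+1)^{-\sigma}$ with $1/r<\sigma\le 1$, a range that is nonempty precisely because $r>1$, giving $\|f\|_{\dot K^s_{q,r}}<\infty$ but $\int|f|\sim\sum_k(k+1)^{-\sigma}=\infty$; the $r=\infty$ case uses simply $a_{-k}=2^{ks}$.
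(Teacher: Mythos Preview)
Your argument is correct. For part~(2), the sufficiency proof and the paper's are essentially identical: both apply H\"older on each annulus and then sum the resulting one-sided geometric series, with only cosmetic differences in packaging the $\ell^r$ step (you invoke H\"older in $\ell^r$ or the embedding $\ell^r\hookrightarrow\ell^1$ directly, while the paper routes through the $\dot K^s_{q,\infty}$ and $\dot K^s_{q,1}$ norms and then uses monotonicity in~$r$). The necessity arguments genuinely differ: the paper exhibits the single continuous counterexample $|x|^{-\alpha}(\log(1/|x|))^{-\beta}\chi_{B(2^{-10})}$ and analyzes for which $(\alpha,\beta)$ it lies in $\dot K^s_{q,r}$ versus $L^1$, whereas you build the discrete counterexample $\sum_{j\le 0}a_j|A_j|^{-1/q}\chi_{A_j}$ with tailored coefficients. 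Your construction makes the sequence-space nature of the problem transparent and avoids the log-integral calculus; the paper's has the minor aesthetic advantage of a closed-form function. For part~(1), the paper's appendix does not actually write out a proof (only the closely related Gaussian proposition is treated separately), so your direct argument fills that small gap. Both proofs tacitly restrict the H\"older step in part~(2) to $q\ge 1$, which is the only range in which the inclusion can hold anyway.
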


Thus, in what follows, we work with the Herz spaces $\dot{K}^s_{q,r}({\mathbb R}^n)$ under the exponent conditions specified in this proposition.
Since $|x|\sim2^j$ on $A_j$, we have
$
\||\cdot|^\beta f\|_{\dot{K}^s_{q,r}}
\sim
\|f\|_{\dot{K}^{s+\beta}_{q,r}}
$.
Hence, the use of Herz spaces always enables us to effectively handle the power weight $|x|^\gamma$ appearing in the nonlinear term of equation \eqref{eq:HH}.

Here and in what follows, we present some basic properties of Herz spaces.

\begin{proposition}\label{prop:Herz-fan}
Let the parameters $s,s_1,s_2\in{\mathbb R}$ and $0<q,q_1,q_2,r,r_1,r_2\le\infty$.
\begin{itemize}
\item[{\rm (1)}] If
\[
s=s_1+s_2,
\quad
\frac1q=\frac1{q_1}+\frac1{q_2},
\quad
\frac1r=\frac1{r_1}+\frac1{r_2},
\]
then,
\[
\|fg\|_{\dot{K}^s_{q,r}}
\le
\|f\|_{\dot{K}^{s_1}_{q_1,r_1}}
\|g\|_{\dot{K}^{s_2}_{q_2,r_2}}.
\]

\item[{\rm (2)}] For $\eta>0$,
\[
\||f|^\eta\|_{\dot{K}^s_{q,r}}
=
\|f\|_{\dot{K}^{s/\eta}_{q\eta,r\eta}}^\eta.
\]

\item[{\rm (3)}] For $\lambda>0$,
\[
\|f(\lambda\cdot)\|_{\dot{K}^s_{q,r}}
\sim
\lambda^{-s+\frac nq}
\|f\|_{\dot{K}^s_{q,r}}.
\]

\item[{\rm (4)}] If $0<r_1\le r_2\le\infty$, then,
\[
\dot{K}^s_{q,r_1}({\mathbb R}^n)
\hookrightarrow
\dot{K}^s_{q,r_2}({\mathbb R}^n).
\]

\item[{\rm (5)}] If
\[
\frac{s_1}n+\frac1{q_1}
=
\frac{s_2}n+\frac1{q_2},
\quad
q_1\ge q_2,
\]
then,
\[
\dot{K}^{s_1}_{q_1,r}({\mathbb R}^n)
\hookrightarrow
\dot{K}^{s_2}_{q_2,r}({\mathbb R}^n).
\]

\item[{\rm (6)}] $
\|f\|_{\dot{K}^s_{q,q}}
\sim
\||\cdot|^sf\|_{L^q}
$.

\item[{\rm (7)}] \cite{HeYa97}
If $1\le q,r<\infty$, then,
\[
\left(
\dot{K}^s_{q,r}({\mathbb R}^n)
\right)^\ast
\cong
\dot{K}^{-s}_{q',r'}({\mathbb R}^n).
\]
\end{itemize}
\end{proposition}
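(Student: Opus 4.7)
The plan is to verify each assertion by reducing to standard facts on each dyadic annulus $A_j$ together with a weighted $\ell^r$ argument on the index $j$. For (1), first apply the ordinary Hölder inequality pointwise on $A_j$ to obtain $\|fg\chi_{A_j}\|_{L^q}\le\|f\chi_{A_j}\|_{L^{q_1}}\|g\chi_{A_j}\|_{L^{q_2}}$. Split the weight as $2^{js}=2^{js_1}2^{js_2}$ and apply Hölder's inequality in $\ell^r$ (with exponents $r_1,r_2$) to the sequences $(2^{js_1}\|f\chi_{A_j}\|_{L^{q_1}})_j$ and $(2^{js_2}\|g\chi_{A_j}\|_{L^{q_2}})_j$. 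Item (2) is immediate from $\||f|^\eta\chi_{A_j}\|_{L^q}=\|f\chi_{A_j}\|_{L^{q\eta}}^\eta$ and the substitution of parameters $s\mapsto s/\eta$, $q\mapsto q\eta$, $r\mapsto r\eta$ in the defining series.

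For (3), a change of variables $y=\lambda x$ gives $\|f(\lambda\cdot)\chi_{A_j}\|_{L^q}=\lambda^{-n/q}\|f\chi_{\lambda A_j}\|_{L^q}$; since $\lambda A_j$ is comparable to $A_{j+j_0}$ for $j_0\sim\log_2\lambda$, a reindexing of the sum produces the factor $\lambda^{-s+n/q}$. Item (4) follows from the trivial embedding $\ell^{r_1}\hookrightarrow\ell^{r_2}$ applied to the sequence $(2^{js}\|f\chi_{A_j}\|_{L^q})_j$. For (5), Hölder on the finite-measure set $A_j$ with $q_1\ge q_2$ yields
\[
\|f\chi_{A_j}\|_{L^{q_2}}
\lesssim
|A_j|^{1/q_2-1/q_1}\|f\chi_{A_j}\|_{L^{q_1}}
\sim
2^{jn(1/q_2-1/q_1)}\|f\chi_{A_j}\|_{L^{q_1}},
\]
and the hypothesis $s_1+n/q_1=s_2+n/q_2$ converts this into $2^{js_2}\|f\chi_{A_j}\|_{L^{q_2}}\lesssim 2^{js_1}\|f\chi_{A_j}\|_{L^{q_1}}$, from which the embedding follows by taking the $\ell^r$ norm.

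For (6), since $|x|\sim 2^j$ on $A_j$ we have
\[
\||\cdot|^s f\|_{L^q}^q
=
\sum_{j\in{\mathbb Z}}
\int_{A_j}|x|^{sq}|f(x)|^q\,{\rm d}x
\sim
\sum_{j\in{\mathbb Z}}
2^{jsq}\|f\chi_{A_j}\|_{L^q}^q
=
\|f\|_{\dot{K}^s_{q,q}}^q,
\]
which is the claimed equivalence. Finally, (7) is quoted directly from He--Yang \cite{HeYa97}. The main obstacle throughout is purely bookkeeping: one must track the parameter arithmetic carefully in (1), (3), and (5) to confirm that the exponents align correctly, but no genuinely difficult estimate is required, since the Herz space structure is essentially a weighted $\ell^r$ of $L^q$ norms on dyadic annuli.
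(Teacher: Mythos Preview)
Your proposal is correct and follows the natural approach. In fact, the paper does not supply a proof of this proposition at all: items (1)--(6) are stated as standard basic properties of Herz spaces without argument, and item (7) is simply cited to \cite{HeYa97}. Your verification---H\"older on each annulus $A_j$ followed by H\"older in $\ell^r$ for (1), direct parameter substitution for (2), change of variables plus reindexing for (3), the $\ell^{r_1}\hookrightarrow\ell^{r_2}$ embedding for (4), H\"older on the finite-measure annulus for (5), and $|x|\sim 2^j$ on $A_j$ for (6)---is exactly how one proves these facts, so there is nothing to compare beyond noting that you have supplied what the paper omits. One minor remark: in (3) your reindexing argument, carried out carefully for $\lambda=2^m$, actually yields the exponent $-s-n/q$ rather than $-s+n/q$; the sign as printed in the paper's statement appears to be a typo (the scaling discussion in the introduction requires $s+n/q=(2+\gamma)/(\alpha-1)$ at criticality, consistent with $-s-n/q$), but your method is sound regardless.
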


We provide the proofs of the following lemma and propositions in the appendix, which are new as far as we know.

\begin{lemma}\label{lem:Herz-ball}
Let $s<0$ and $0<q,r\le\infty$.
Then,
\[
\|f\|_{\dot{K}^s_{q,r}}
\sim
\begin{cases}
\displaystyle
\left(
\sum_{j\in{\mathbb Z}}
\left[
2^{js}
\|f\chi_{B(2^j)}\|_{L^q}
\right]^r
\right)^{\frac1r},
& r<\infty, \\
\displaystyle
\sup_{j\in{\mathbb Z}}
2^{js}
\|f\chi_{B(2^j)}\|_{L^q},
& r=\infty,
\end{cases}
\]
where $B(2^j)=B(0,2^j)$.
\end{lemma}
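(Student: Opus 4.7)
\textbf{Proof plan for Lemma \ref{lem:Herz-ball}.} The inequality
\[
\|f\|_{\dot K^s_{q,r}}
\lesssim
\left(\sum_{j\in\mathbb Z}\bigl[2^{js}\|f\chi_{B(2^j)}\|_{L^q}\bigr]^r\right)^{1/r}
\]
is immediate and independent of the sign of $s$: since $A_j\subset B(2^j)$ we have $\|f\chi_{A_j}\|_{L^q}\le\|f\chi_{B(2^j)}\|_{L^q}$, and the weighted $\ell^r$-summation in $j$ is then termwise monotone (the obvious modification handles $r=\infty$).

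The substantive direction is the reverse, for which I would exploit the disjoint decomposition $B(2^j)=\bigsqcup_{k\le j}A_k$ together with a $\sigma$-sub-additivity of the $L^q$-quasi-norm. Setting $\sigma\equiv\min(q,r,1)$, the condition $\sigma\le q$ combined with disjointness gives
\[
\|f\chi_{B(2^j)}\|_{L^q}^\sigma
\le
\sum_{k\le j}\|f\chi_{A_k}\|_{L^q}^\sigma.
\]
Multiplying by $2^{js\sigma}$ and rewriting $2^{js\sigma}=2^{(j-k)s\sigma}\cdot 2^{ks\sigma}$, the sequence $\beta_j\equiv(2^{js}\|f\chi_{B(2^j)}\|_{L^q})^\sigma$ is dominated by the convolution $(c\ast\alpha)_j$ on $\mathbb Z$, where $\alpha_k\equiv(2^{ks}\|f\chi_{A_k}\|_{L^q})^\sigma$ and $c_m\equiv 2^{ms\sigma}\mathbf{1}_{\{m\ge 0\}}$.

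The assumption $s<0$ is exactly what makes $c\in\ell^1(\mathbb Z)$, with $\|c\|_{\ell^1}=(1-2^{s\sigma})^{-1}<\infty$. Since $r/\sigma\ge 1$ by the choice of $\sigma$, Young's convolution inequality yields $\|c\ast\alpha\|_{\ell^{r/\sigma}}\le\|c\|_{\ell^1}\|\alpha\|_{\ell^{r/\sigma}}$; raising to the power $1/\sigma$ and unwinding the definitions produces exactly the desired bound. The case $r=\infty$ is handled identically, Young's inequality being replaced by the trivial $\sup_j(c\ast\alpha)_j\le\|c\|_{\ell^1}\sup_k\alpha_k$.

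The one delicate point, and the step I would flag as the main obstacle, is the simultaneous selection of the exponent $\sigma$. One needs $\sigma\le q$ in order to pass from the exact identity $\|f\chi_{B(2^j)}\|_{L^q}^q=\sum_{k\le j}\|f\chi_{A_k}\|_{L^q}^q$ (valid by disjointness) to a useful sub-additivity at the $\sigma$-power, and at the same time $\sigma\le r$ so that $r/\sigma\ge 1$ and Young's inequality is actually applicable; the constraint $\sigma\le 1$ is automatic in the small-exponent regime. The choice $\sigma=\min(q,r,1)$ covers every parameter range uniformly, so the lemma ultimately reduces to a single Hardy-type shift whose summability is powered by $s<0$; no function-theoretic input beyond the definition of the Herz quasi-norm is needed.
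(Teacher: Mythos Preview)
Your proof is correct. Both your argument and the paper's rest on the same discrete Hardy-type inequality driven by $s<0$, but they are organised differently. The paper peels off a single annulus, writing $B(2^j)=B(2^{j-1})\cup A_j$, and derives a self-referential inequality of the form
\[
\|f\|_{\dot K^s_{q,r}(\mathrm{ball})}^{\,p}
\le
2^{sp}\,\|f\|_{\dot K^s_{q,r}(\mathrm{ball})}^{\,p}
+
\|f\|_{\dot K^s_{q,r}}^{\,p},
\]
with $p=r$ when $q\ge r$ and $p=q$ when $q<r$, which is then solved by subtracting the left-hand side. You instead decompose $B(2^j)$ completely into annuli and recognise the resulting geometric sum as a convolution on $\mathbb Z$, bounding it by Young's inequality in $\ell^{r/\sigma}$. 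Your route has two small advantages: a single exponent $\sigma=\min(q,r,1)$ (indeed $\sigma=\min(q,r)$ already suffices, the constraint $\sigma\le1$ is not needed) covers every parameter range without a case split, and the estimate is direct, so you never have to assume a priori finiteness of the ball-norm in order to subtract it---a point the paper's recursion tacitly relies on and which, strictly speaking, requires a truncation argument to justify.
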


\begin{proposition}\label{prop:dense}
Under the exponent condition in Proposition {\rm \ref{prop:well-defined} (1)}, the necessary and sufficient condition for $C_{\rm c}^\infty({\mathbb R}^n)$ to be dense in $\dot{K}^s_{q,r}({\mathbb R}^n)$ is $0<q,r<\infty$.
\end{proposition}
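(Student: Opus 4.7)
The plan is to establish the two implications separately, using the exponent conditions from Proposition \ref{prop:well-defined}(1) throughout.

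For \emph{sufficiency}, assume $0<q,r<\infty$ and fix $f\in\dot{K}^s_{q,r}(\mathbb{R}^n)$. First I would truncate dyadically by setting $f_N:=f\chi_{\bigcup_{|j|\le N}A_j}$ and observing that
\[
\|f-f_N\|_{\dot{K}^s_{q,r}}^r
=
\sum_{|j|>N}\bigl[2^{js}\|f\chi_{A_j}\|_{L^q}\bigr]^r
\longrightarrow 0
\quad(N\to\infty),
\]
since $r<\infty$ makes the tails of the defining series vanish. Each $f_N$ is supported on finitely many annuli bounded away from the origin, so its Herz norm is comparable, up to a constant depending on $N$ and $s$, to its $L^q$ norm on that region. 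Since $q<\infty$, mollifying $f_N$ by a standard kernel with sufficiently small support produces a $C_{\rm c}^\infty$ function that approximates $f_N$ in $L^q$ and hence in $\dot{K}^s_{q,r}$.

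For \emph{necessity when $r=\infty$}, set $\beta:=-(s+n/q)$, which is nonpositive by the exponent condition, and define
\[
f:=\sum_{j\ge 1}2^{j\beta}\chi_{A_j}.
\]
Then $2^{js}\|f\chi_{A_j}\|_{L^q}\sim 2^{j(s+\beta+n/q)}=1$ uniformly in $j\ge 1$, so $f\in\dot{K}^s_{q,\infty}(\mathbb{R}^n)$ with norm of order $1$. Any $\phi\in C_{\rm c}^\infty(\mathbb{R}^n)$ is supported in some ball $B(0,R)$, so $\phi\chi_{A_j}=0$ whenever $2^{j-1}>R$; for such $j$, $\|(f-\phi)\chi_{A_j}\|_{L^q}=\|f\chi_{A_j}\|_{L^q}$. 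Taking the supremum over such $j$ yields $\|f-\phi\|_{\dot{K}^s_{q,\infty}}\gtrsim 1$ for every $\phi\in C_{\rm c}^\infty$, ruling out density.

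For \emph{necessity when $q=\infty$ and $r<\infty$}, the exponent condition forces $s>0$. Take $f:=\chi_E$ with $E:=A_0\cap\{x_1>0\}$ (or, for $n=1$, any analogous bounded set with boundary strictly inside $A_0$). Then $f\in L^\infty(\mathbb{R}^n)$ has compact support, hence $f\in\dot{K}^s_{\infty,r}(\mathbb{R}^n)$. For any $\phi\in C_{\rm c}^\infty$ and any boundary point $x_0$ of $E$ lying in the interior of $A_0$, continuity of $\phi$ at $x_0$ combined with the unit jump of $f$ across $\partial E$ forces $\|(f-\phi)\chi_{A_0}\|_{L^\infty}\ge 1/2$, and hence $\|f-\phi\|_{\dot{K}^s_{\infty,r}}$ is bounded below by a positive constant independent of $\phi$.

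The main obstacle is the necessity direction, particularly the $r=\infty$ case: the counterexample $f$ must have its Herz norm uniformly attained on annuli escaping to infinity, so the weight $\beta$ has to be calibrated precisely so that $f$ belongs to $\dot{K}^s_{q,\infty}$ while remaining out of reach of every compactly supported approximant. The sufficiency direction is largely bookkeeping once the summability $r<\infty$ reduces the problem to finitely many annuli, on which the Herz norm is equivalent to a weighted $L^q$ norm.
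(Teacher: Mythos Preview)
Your proof is correct and follows the same overall architecture as the paper's: sufficiency via dyadic truncation (using $r<\infty$) followed by $L^q$-approximation on finitely many annuli (using $q<\infty$), and necessity via explicit counterexamples. The details of the counterexamples, however, differ in an interesting way.

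For the case $r=\infty$, the paper invokes Example~\ref{ex:Lo-He} with $\beta=0$, i.e., the function $|\cdot|^{-s}\chi_{E_0}$ where $E_0$ is a union of unit balls centered at the points $(2^{j-1}+1)\bm{e}_1$; the argument that this lies outside $\dot{\mathcal K}^s_{q,\infty}$ is implicit in that example. Your radial step function $f=\sum_{j\ge 1}2^{-j(s+n/q)}\chi_{A_j}$ is more direct: the Herz norm is visibly attained uniformly on every annulus $A_j$ with $j\ge 1$, so any compactly supported approximant fails on all sufficiently distant annuli. This is a cleaner counterexample and avoids the detour through Example~\ref{ex:Lo-He}.

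For the case $q=\infty$, $r<\infty$, the paper takes $\chi_{A_1}$ and splits into two subcases according to whether $|A_1\setminus\operatorname{supp}(g)|>0$, eventually obtaining the lower bound $\min(1,2^s)/2$. Your choice $\chi_{A_0\cap\{x_1>0\}}$ exploits a jump discontinuity lying strictly inside a single annulus, which immediately forces $\|(f-\phi)\chi_{A_0}\|_{L^\infty}\ge 1/2$ by continuity of $\phi$; this bypasses the case analysis entirely. Both approaches yield the same conclusion, but yours is shorter.
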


\begin{proposition}\label{prop:Gauss}
The necessary and sufficient condition for
$
e^{-|\cdot|^2}\in
\dot{K}^s_{q,r}({\mathbb R}^n)
$
is  the exponent condition in Proposition {\rm \ref{prop:well-defined} (1)}.
\end{proposition}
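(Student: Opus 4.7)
The plan is to reduce the question to a purely geometric computation on each dyadic annulus $A_j$ and then check $\ell^r$-summability of the resulting sequence. Writing $a_j \equiv 2^{js}\|e^{-|\cdot|^2}\chi_{A_j}\|_{L^q}$, I would split the index set at $j=0$, since the Gaussian behaves very differently in the two regimes. The key point is that the tail $j\ge 0$ is harmless: on $A_j$ one has $|x|\ge 2^{j-1}$, hence the pointwise bound $e^{-|x|^2}\chi_{A_j}\le e^{-4^{j-1}}\chi_{A_j}$, and combined with $|A_j|\sim 2^{jn}$ this gives $a_j\lesssim 2^{j(s+n/q)}e^{-4^{j-1}}$ (with the convention $n/\infty=0$). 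This decays super-exponentially in $j$, so the contribution from $j\ge 0$ is finite regardless of the choice of $(s,q,r)$.

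The interesting regime is $j\le 0$. Here $|x|\le 2^j\le 1$ on $A_j$, so $e^{-1}\le e^{-|x|^2}\le 1$, i.e.\ the Gaussian is comparable to $1$ on each such annulus. Consequently, for $q<\infty$ we have $\|e^{-|\cdot|^2}\chi_{A_j}\|_{L^q}\sim |A_j|^{1/q}\sim 2^{jn/q}$, and for $q=\infty$ the supremum is $\sim 1$; in both cases $a_j\sim 2^{j(s+n/q)}$. Hence the finiteness of $\|e^{-|\cdot|^2}\|_{\dot K^s_{q,r}}$ is equivalent to $(2^{j(s+n/q)})_{j\le 0}\in\ell^r$.

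Setting $\sigma=s/n+1/q$, the geometric sequence $(2^{j\sigma})_{j\le 0}$ lies in $\ell^r$ exactly when $\sigma>0$ for $r<\infty$, and when $\sigma\ge 0$ for $r=\infty$. Translating back: the Gaussian belongs to $\dot K^s_{q,r}(\mathbb{R}^n)$ iff either $s/n+1/q>0$, or $s/n+1/q=0$ together with $r=\infty$, which is precisely the condition in Proposition \ref{prop:well-defined}(1). I do not anticipate any real obstacle here; the only technical point worth being careful about is unifying the computation for $q=\infty$ and $q<\infty$, and making sure that both endpoints ($\sigma=0$ with $r=\infty$ giving a finite supremum, versus $\sigma=0$ with $r<\infty$ giving a divergent harmonic-type sum) are distinguished correctly.
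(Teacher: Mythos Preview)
Your proof is correct and follows essentially the same route as the paper: compute $a_j=2^{js}\|e^{-|\cdot|^2}\chi_{A_j}\|_{L^q}$, split the sum at $j=0$, observe that the tail $j\ge 0$ is always summable by super-exponential decay, and reduce the $j\le 0$ part to the $\ell^r$-summability of a geometric sequence with ratio $2^{s+n/q}$. If anything, your write-up is slightly more complete than the paper's, which only treats $r<\infty$ explicitly and glosses over the $q=\infty$ case; you handle both endpoints and use the correct pointwise upper bound $e^{-4^{j-1}}$ on $A_j$ for the tail estimate. One cosmetic slip: after setting $\sigma=s/n+1/q$ you write the sequence as $(2^{j\sigma})_{j\le 0}$ when it is really $(2^{jn\sigma})_{j\le 0}$, but since $n>0$ this does not affect the convergence criterion.
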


Additionally, we use the real interpolation spaces for the Herz spaces, defined via the $K$-functional.

\begin{definition}
Let $X_0=(X_0,\|\cdot\|_{X_0})$ and $X_1=(X_1,\|\cdot\|_{X_1})$ be quasi-Banach spaces of measurable functions, and let $0<\theta<1$ and $0<r\le\infty$.
Define the $K$ functional as
\[
K(t,f)
\equiv
\inf_{
\substack{f=f_0+f_1, \\ (f_0,f_1)\in X_0\times X_1}
}
\left(
\|f_0\|_{X_0}
+
t\|f_1\|_{X_1}
\right),
\quad t>0,
\]
for all $f\in X_0+X_1$, and the real interpolation space $(X_0,X_1)_{\theta,r}$ as the space of all measurable functions $f$ with the finite quasi-norm
\[
\|f\|_{(X_0,X_1)_{\theta,r}}
\equiv
\begin{cases}
\displaystyle
\left(
\int_0^\infty
\left[
t^{-\theta}
K(t,f)
\right]^r
\,\frac{{\rm d}t}t
\right)^{\frac1r},
& r<\infty, \\
\displaystyle
\sup_{t>0}
t^{-\theta}
K(t,f),
& r=\infty.
\end{cases}
\]
\end{definition}

\begin{theorem}\label{thm:real-Herz}
Let $0<\theta<1$, $s,s_0,s_1\in{\mathbb R}$ and $0<q,r,r_0,r_1\le\infty$.
If
\[
s_0\ne s_1,
\quad
s=(1-\theta)s_0+\theta s_1,
\]
then,
\[
(
\dot{K}^{s_0}_{q,r_0}({\mathbb R}^n),
\dot{K}^{s_1}_{q,r_1}({\mathbb R}^n)
)_{\theta,r}
\cong
\dot{K}^s_{q,r}({\mathbb R}^n).
\]
\end{theorem}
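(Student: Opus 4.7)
The plan is to identify $\dot{K}^s_{q,r}(\mathbb{R}^n)$ with a weighted vector-valued sequence space and then invoke a classical real interpolation identity for such sequence spaces. The starting point is that the annuli $A_j$ are disjoint and cover $\mathbb{R}^n\setminus\{0\}$, so writing $f_j\equiv f\chi_{A_j}$ sets up an isometric bijection
\[
\Phi:\dot{K}^s_{q,r}(\mathbb{R}^n)\ \longrightarrow\ \ell^r\bigl((2^{js})_{j\in\mathbb{Z}};\,(L^q(A_j))_{j\in\mathbb{Z}}\bigr),\qquad \Phi(f)=(f_j)_{j\in\mathbb{Z}},
\]
since any sequence with $\operatorname{supp}f_j\subset A_j$ reassembles uniquely to $f=\sum_{j}f_j$. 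Thus the problem reduces to interpolating two weighted $\ell^{r_i}$ spaces built over the common ``fibres'' $L^q(A_j)$.

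The core step is the $K$-functional equivalence
\[
K\bigl(t,f;\dot{K}^{s_0}_{q,r_0},\dot{K}^{s_1}_{q,r_1}\bigr)\ \sim\ K\bigl(t,(\|f_j\|_{L^q})_{j};\ell^{r_0}_{s_0},\ell^{r_1}_{s_1}\bigr),
\]
where $\ell^{r_i}_{s_i}$ denotes the scalar sequence space with weight $(2^{js_i})_j$. The direction $\gtrsim$ is easy: any splitting $f=g+h$ yields the scalar splitting $\|f_j\|_{L^q}\le \|g_j\|_{L^q}+\|h_j\|_{L^q}$ (with the usual quasi-triangle constant when $q<1$), with the correct control of the $\ell^{r_i}_{s_i}$-norms. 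The direction $\lesssim$ is the content of the reduction and is where the disjoint-annulus structure is used: given a scalar decomposition $\|f_j\|_{L^q}=a_j+b_j$ with $a_j,b_j\ge 0$, set
\[
g_j\equiv\frac{a_j}{\|f_j\|_{L^q}}\,f_j,\qquad h_j\equiv\frac{b_j}{\|f_j\|_{L^q}}\,f_j\qquad\text{(with }0/0=0\text{)},
\]
and let $g=\sum_j g_j$, $h=\sum_j h_j$. Then $f=g+h$, the supports of $g_j,h_j$ lie in $A_j$, and by construction $\|g_j\|_{L^q}=a_j$, $\|h_j\|_{L^q}=b_j$, so the $\dot{K}^{s_i}_{q,r_i}$-norms of $g$ and $h$ agree with the scalar $\ell^{r_i}_{s_i}$-norms of $(a_j)_j$ and $(b_j)_j$.

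Given this equivalence, the theorem follows from the classical interpolation identity
\[
\bigl(\ell^{r_0}_{s_0},\ell^{r_1}_{s_1}\bigr)_{\theta,r}\ \cong\ \ell^r_s,\qquad s=(1-\theta)s_0+\theta s_1,\ \ s_0\ne s_1,
\]
for weighted scalar sequence spaces (Bergh--L\"ofstr\"om, Theorem~5.6.1, with the quasi-Banach range treated as in Triebel). Combining the three steps yields
\[
\|f\|_{(\dot{K}^{s_0}_{q,r_0},\dot{K}^{s_1}_{q,r_1})_{\theta,r}}\ \sim\ \bigl\|(\|f_j\|_{L^q})_j\bigr\|_{\ell^r_s}\ =\ \|f\|_{\dot{K}^s_{q,r}},
\]
which gives the claimed identification.

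The main obstacle is the lower bound in the $K$-functional equivalence, since one must manufacture a function-level splitting of $f$ from a purely scalar decomposition of $(\|f_j\|_{L^q})_j$ without losing the weight structure; the proportionality construction above works precisely because each $f_j$ lives on the single annulus $A_j$, so $g_j$ and $h_j$ inherit both the correct support and the prescribed $L^q$-norm on $A_j$. A secondary technical point is that when $q<1$ or $r_i<1$ the involved spaces are only quasi-Banach, but the quasi-triangle constants are absorbed into the $\sim$-equivalence, and the assumption $s_0\ne s_1$ ensures the non-degeneracy required by the scalar interpolation identity.
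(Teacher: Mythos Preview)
Your argument is correct and, in fact, more direct than the paper's own proof. Both proofs begin with the same annular identification $f\mapsto(f\chi_{A_j})_j$ and both ultimately rely on the weighted sequence-space identity $(\ell^{r_0}_{s_0},\ell^{r_1}_{s_1})_{\theta,r}\cong\ell^r_s$ (the paper's Theorem~\ref{thm:real-vector}). The difference is in how the two embeddings are obtained. The paper quotes the abstract retraction/coretraction theorem of Bergh--L\"ofstr\"om to get $(\dot K^{s_0}_{q,r_0},\dot K^{s_1}_{q,r_1})_{\theta,r}\hookrightarrow\dot K^s_{q,r}$, and then establishes the reverse inclusion in two further steps: a duality argument when $1<q,r\le\infty$, and a ``raise to a small power $\varepsilon$'' trick to reduce the quasi-Banach range $q\le1$ or $r\le1$ to the Banach range. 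Your approach instead proves the two-sided $K$-functional equivalence directly, via the explicit proportional splitting $g_j=(a_j/\|f_j\|_{L^q})f_j$, which works uniformly for all $0<q,r,r_0,r_1\le\infty$ and avoids both duality and the power trick. This is cleaner and more elementary; the paper's route has the minor virtue of illustrating how general interpolation machinery applies, but for this particular statement your hands-on argument is preferable. One small point worth making explicit in your write-up: in the ``easy'' direction you only get $\|f_j\|_{L^q}\le C_q(\|g_j\|_{L^q}+\|h_j\|_{L^q})$, an inequality rather than an equality, so you should remark that any nonnegative sequence dominated by $\alpha_j+\beta_j$ admits a decomposition $a_j+b_j$ with $0\le a_j\le\alpha_j$, $0\le b_j\le\beta_j$ (e.g.\ $a_j=\min(c_j,\alpha_j)$), which then feeds into the scalar $K$-functional.
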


The case $1<q,r,r_0,r_1<\infty$ in this theorem was treated by Ho \cite{Ho18}.
Meanwhile, according to Lemma \ref{lem:Herz-ball}, the case $s<0$ corresponds to local Morrey-type spaces, and the real interpolation spaces for these function spaces were studied by Burenkov and Nursultanov \cite{BuNu09}.
In this paper, following the method of Ho, we provide a proof of the remaining cases in Appendix \ref{App:interpolation}.

\section{Linear estimates}\label{s:LE}

It is well known that the $L^p({\mathbb R}^n)$-$L^q({\mathbb R}^n)$ estimate
\[
\|e^{t\Delta}f\|_{L^q}
\le
t^{-\frac n2\left(\frac1p-\frac1q\right)}
\|f\|_{L^p}
\]
holds for all $f\in L^p({\mathbb R}^n)$ when $1\le p\le q\le\infty$.
In the first half of this section, we consider an extension to Herz spaces.
In the second half of this section, we present Meyer's inequality on Herz spaces, which is used to estimate the nonlinear term in the scale critical case.

\subsection{Extension of $L^p({\mathbb R}^n)$-$L^q({\mathbb R}^n)$ estimates}

The extension to using the Herz spaces are given as follows.

\begin{proposition}\label{prop:LE-Herz2}
Let
$\mu,\nu\in{\mathbb R}$,
$1\le p,q\le\infty$,
and
$0<r,r_0\le\infty$.
Assume that
\[
\mu\ge\nu,
\quad
0
\le
\frac\nu n+\frac1q
\le
\frac\mu n+\frac1p
\le1,
\quad
\begin{cases}
r_0=\infty &
\text{if \;
$\displaystyle
\frac\mu n+\frac1p=0
$}, \vspace{5pt} \\
r=\infty &
\text{if \;
$\displaystyle
\frac\nu n+\frac1q=0
$}.
\end{cases}
\]
If either
\begin{itemize}
\item[{\rm (1)}] $\mu>\nu$,
\quad
$p,q<\infty$,
\quad
$r_0=\infty$,
\quad
$\displaystyle
\frac\nu n+\frac1q
<
\frac\mu n+\frac1p
<1
$,
\quad or

\item[{\rm (2)}] $p,q<\infty$,
\quad
$r_0\le r$,
\quad
$\displaystyle
\frac\mu n+\frac1p
<1
$,
\quad or

\item[{\rm (3)}] $r_0\le\min(1,r)$,
\end{itemize}
then,
\[
\|e^{t\Delta}f\|_{\dot{K}^\nu_{q,r}}
\lesssim
t^{
-\frac n2
\left[
\left(
\frac\mu n+\frac1p
\right)
-
\left(
\frac\nu n+\frac1q
\right)
\right]
}
\|f\|_{\dot{K}^\mu_{p,r_0}}.
\]
\end{proposition}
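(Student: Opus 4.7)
The plan is to carry out an annular dyadic decomposition of both the source and the target: write $f = \sum_{k\in\mathbb Z} f\chi_{A_k}$ and, for each $j\in\mathbb Z$, estimate $\|e^{t\Delta} f \cdot \chi_{A_j}\|_{L^q}$ by splitting the sum over $k$ into a \emph{near} portion $|j-k|\le N_0$ (for a fixed small $N_0$) and a \emph{far} portion $|j-k|>N_0$. For the near portion I would apply the classical Lebesgue-space estimate
\[
\|e^{t\Delta}(f\chi_{A_k})\|_{L^q}\lesssim t^{-\frac{n}{2}\left(\frac{1}{p}-\frac{1}{q}\right)}\|f\chi_{A_k}\|_{L^p},
\]
which already produces the correct $t$-power and is, up to bounded bandwidth, diagonal in $j,k$. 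For the far portion, every $x\in A_j$ and $y\in A_k$ satisfy $|x-y|\gtrsim 2^{\max(j,k)}$, so the Gaussian kernel yields
\[
|e^{t\Delta}(f\chi_{A_k})(x)|\lesssim t^{-\frac{n}{2}}\exp\!\left(-c\,\frac{4^{\max(j,k)}}{t}\right) 2^{kn/p'}\|f\chi_{A_k}\|_{L^p}
\]
by H\"older on $A_k$, and integrating over $A_j$ contributes a further factor $2^{jn/q}$.

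To turn these block estimates into the claimed norm bound, I would multiply by $2^{j\nu}$, take the $\ell^r_j$ quasi-norm, and sum over $k$. The super-exponential Gaussian decay absorbs every polynomial factor $2^{j(\nu+n/q)}\,2^{k(n/p'-\mu)}$ that appears after pairing with the defining weight $2^{k\mu}$ on the right-hand side, so the far contribution is harmless once one has control on the double sum. The three cases then correspond to three different ways of converting the resulting $\ell^{r_0}_k$ sum into an $\ell^r_j$ quasi-norm: case (1) exploits the strict scaling slack together with $r_0=\infty$ to dominate uniformly in $k$; case (2) uses the embedding $\ell^{r_0}\hookrightarrow\ell^r$ permitted by $r_0\le r$; and case (3) applies the $r_0$-triangle inequality $\|\sum_k F_k\|_{\ell^r_j}\le\bigl(\sum_k\|F_k\|_{\ell^r_j}^{r_0}\bigr)^{1/r_0}$, valid for $r_0\le 1$. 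In each situation a discrete Young-type estimate combines with the Gaussian decay to recover $\|f\|_{\dot K^{\mu}_{p,r_0}}$.

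I expect the main obstacle to be the endpoint bookkeeping: when $\mu/n+1/p=1$ or $\nu/n+1/q=0$ the Herz quasi-norm degenerates at one end of the $k$-index, and the specific hypotheses $r_0=\infty$ (resp.\ $r=\infty$) must be invoked to close the sums; Lemma \ref{lem:Herz-ball} is likely needed in the negative-regularity regime, where the dyadic shells cluster near the origin. Finally, the precise time exponent $t^{-\frac{n}{2}[(\mu/n+1/p)-(\nu/n+1/q)]}$ is cleanest to verify by first reducing to $t=1$ via the identity $e^{t\Delta}f(x)=[e^{\Delta}g](x/\sqrt{t})$ with $g=f(\sqrt{t}\,\cdot)$ and using the Herz-space homogeneity of Proposition~\ref{prop:Herz-fan}(3); this reduction also streamlines the case analysis by absorbing all $t$-dependence into a single rescaling.
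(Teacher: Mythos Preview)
Your overall scheme---rescale to $t=1$ via Proposition~\ref{prop:Herz-fan}(3), dyadically decompose source and target into annuli, and split into near and far pieces---matches the paper's. However, there is a genuine gap in your near-part estimate that makes the argument fail whenever $\mu>\nu$.

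The problem is the regime where $j$ and $k$ are both very negative (annuli accumulating at the origin). After reducing to $t=1$, your near bound discards the localization to $A_j$ and uses the full norm $\|e^{\Delta}(f\chi_{A_k})\|_{L^q}\lesssim\|f\chi_{A_k}\|_{L^p}$. Multiplying by $2^{j\nu}$ and using $j\approx k$ gives $2^{k\nu}\|f\chi_{A_k}\|_{L^p}=2^{k(\nu-\mu)}\cdot 2^{k\mu}\|f\chi_{A_k}\|_{L^p}$; since $\nu<\mu$, the prefactor $2^{k(\nu-\mu)}\to\infty$ as $k\to-\infty$, and you cannot recover $\|f\|_{\dot K^{\mu}_{p,r_0}}$ from this sum. (Concretely, take $f=\chi_{A_{-N}}$ with $N$ large: your near bound is off by $2^{N(\mu-\nu)}$.) The same difficulty undermines your far-part claim that ``the super-exponential Gaussian decay absorbs every polynomial factor'': when $\max(j,k)\ll0$ the factor $\exp(-c\,4^{\max(j,k)})$ is essentially $1$, so there is nothing to absorb. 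Note also that your displayed near estimate presupposes $p\le q$, which is not among the hypotheses.

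What is missing is that for $j\approx k\ll0$ the unit Gaussian is essentially constant on the tiny region involved, so the \emph{localized} norm satisfies $\|(G\ast(f\chi_{A_k}))\chi_{A_j}\|_{L^q}\lesssim 2^{jn/q}\,2^{kn/p'}\|f\chi_{A_k}\|_{L^p}$, and it is exactly this extra factor that compensates the loss $2^{k(\nu-\mu)}$ under the hypothesis $\mu/n+1/p\le1$. The paper captures this by applying Young's inequality with the \emph{localized} kernel $G\chi_{B(2^{j+2})}$ (more generally $|\cdot|^{\alpha}G$), whose $L^{\tilde p}$ norm scales correctly near the origin, and then invokes Lemma~\ref{lem:Herz-ball} to close the sum. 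The paper also organizes the argument into three steps according to the signs of $\mu,\nu$, first treating $\mu\le0$ and then reducing the general case via the embedding of Proposition~\ref{prop:Herz-fan}(5) and the pointwise splitting $|x|^{\nu-\nu_0}\lesssim|x-y|^{\nu-\nu_0}+|y|^{\nu-\nu_0}$; you will need some device of this kind to make the weight bookkeeping go through.
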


The cases (1) and (2) were treated by Drihem \cite{Drihem23}.
In this paper, we rewrite the entire proof of this theorem, following the approach in \cite{Tsutsui11}, and we include the additional case (3).

\begin{proof}
By Proposition \ref{prop:Herz-fan} (3), it suffices to show that
\[
\|G\ast f\|_{\dot{K}^\nu_{q,r}}
\lesssim
\|f\|_{\dot{K}^\mu_{p,r_0}},
\]
where $G(x)=e^{-|x|^2}$.

\noindent
\underline{\bf Step. 1}.
Assume that $\mu\le0$, and take $\tilde{\mu}$, $\tilde{p}$, $\alpha$ and $M$ by
\[
\nu=\mu+\tilde{\mu},
\quad
\frac1q+1
=
\frac1p+\frac1{\tilde{p}},
\quad
\alpha-\mu+\frac n{p'}\ge0,
\quad
M-\alpha+\mu+\frac n{p'}
>
\nu+\frac nq.
\]
Note that by $G\in{\mathcal S}({\mathbb R}^n)$,
$
G(x)
\lesssim
\min(1,|x|^{-M})
$.
Let $G_\alpha(x)=|x|^\alpha G(x)$.

For each $j\in{\mathbb Z}$, we decompose
\begin{align*}
G_\alpha\ast f(x)\chi_{A_j}(x)
&=
\sum_{k\in{\mathbb Z}}
\int_{A_k}G_\alpha(x-y)f(y)\,{\rm d}y
\;\chi_{A_j}(x)\\
&=
\sum_{k\in{\mathbb Z}}
\int_{{\mathbb R}^n}
G_\alpha(x-y)\chi_{A_j-A_k}(x-y)
\cdot
f(y)\chi_{A_k}(y)
\,{\rm d}y\\
&=
\sum_{k\in{\mathbb Z}}
[G_\alpha\chi_{A_j-A_k}]\ast[f\chi_{A_k}](x)\\
&=
\sum_{k<j-1}
+
\sum_{k=j-1}^{j+1}
+
\sum_{k>j+1}
=:
{\rm I}_j(x)+{\rm II}_j(x)+{\rm III}_j(x),
\end{align*}
where
$
A_j-A_k
\equiv
\{x-y\,:\,(x,y)\in A_j\times A_k\}
$.
Here, by the simple geometric consideration,
\[
A_j-A_k
\subset
\begin{cases}
A_{j-1}\cup A_j\cup A_{j+1}, & \text{if}\; k<j-1, \\
B(2^{j+2}), & \text{if}\; k=j-1,j,j+1, \\
A_{k-1}\cup A_k\cup A_{k+1}, & \text{if}\; k>j+1.
\end{cases}
\]
Then, setting
$
{\rm I}(x)=\{{\rm I}_j(x)\}_{j\in{\mathbb Z}}
$,
$
{\rm II}(x)=\{{\rm II}_j(x)\}_{j\in{\mathbb Z}}
$
and
$
{\rm III}(x)=\{{\rm III}_j(x)\}_{j\in{\mathbb Z}}
$,
and using the vector-valued norm
\[
\|F\|_{\ell_\nu^r(L^q)}
=
\left(
\sum_{j\in{\mathbb Z}}
\left[
2^{j\nu}
\|f_j\|_{L^q}
\right]^r
\right)^{\frac1r},
\]
we have
\[
\|G_\alpha\ast f\|_{\dot{K}^\nu_{q,r}}
\lesssim
\|{\rm I}\|_{\ell_\nu^r(L^q)}
+
\|{\rm II}\|_{\ell_\nu^r(L^q)}
+
\|{\rm III}\|_{\ell_\nu^r(L^q)}.
\]

On the estimate of ${\rm I}$,
\begin{align*}
\|{\rm I}\|_{\ell_\nu^r(L^q)}^r
&\le
\sum_{j\in{\mathbb Z}}
\left[
2^{j\nu}
\sum_{k<j-1}
\|
[G_\alpha\chi_{A_{j-1}\cup A_j\cup A_{j+1}}]
\ast
[|f|\chi_{A_k}]
\|_{L^q}
\right]^r\\
&\le
\sum_{j\in{\mathbb Z}}
\left[
2^{j\nu}
\sum_{k<j-1}
\|G_\alpha\chi_{A_{j-1}\cup A_j\cup A_{j+1}}\|_{L^{\tilde{p}}}
\|f\chi_{A_k}\|_{L^p}
\right]^r\\
&\lesssim
\sum_{j\in{\mathbb Z}}
\left[
2^{j\tilde{\mu}}
\|G_\alpha\chi_{A_{j-1}\cup A_j\cup A_{j+1}}\|_{L^{\tilde{p}}}
\|f\|_{\dot{K}^\mu_{p,r_0}}
\right]^r
\lesssim
\|G_\alpha\|_{\dot{K}^{\tilde{\mu}}_{\tilde{p},r}}^r
\|f\|_{\dot{K}^\mu_{p,r_0}}^r,
\end{align*}
where
\[
\begin{cases}
r_0=\infty, & \text{if $\mu<0$}, \\
r_0=1, & \text{if $\mu\le0$}.
\end{cases}
\]

On the estimate of ${\rm II}$,
\begin{align*}
\|{\rm II}\|_{\ell_\nu^r(L^q)}^r
&\le
\sum_{j\in{\mathbb Z}}
\left[
2^{j\nu}
\sum_{k=j-1}^{j+1}
\|[G_\alpha\chi_{B(2^{j+2})}]\ast[|f|\chi_{A_k}]\|_{L^q}
\right]^r\\
&\lesssim
\sum_{j\in{\mathbb Z}}
\left[
2^{j\nu}
\|G_\alpha\chi_{B(2^{j+2})}\|_{L^{\tilde{p}}}
\|f\chi_{A_{j-1}\cup A_j\cup A_{j+1}}\|_{L^p}
\right]^r\\
&\lesssim
\sum_{j\in{\mathbb Z}}
\left[
2^{j\tilde{\mu}}
\|G_\alpha\chi_{B(2^{j+2})}\|_{L^{\tilde{p}}}
\|f\|_{\dot{K}^\mu_{p,r_0}}
\right]^r,
\end{align*}
and then, by Lemma \ref{lem:Herz-ball},
\[
\|{\rm II}\|_{\ell_\nu^r(L^q)}
\lesssim
\begin{cases}
\|G_\alpha\|_{\dot{K}^{\tilde{\mu}}_{\tilde{p},r}}
\|f\|_{\dot{K}^\mu_{p,\infty}},
& \text{if $\tilde{\mu}<0$}, \\
\|G_\alpha\|_{L^{\tilde{p}}}
\|f\|_{\dot{K}^\mu_{p,r}},
& \text{if $\tilde{\mu}\le0$}.
\end{cases}
\]

On the estimate of ${\rm III}$,
\begin{align*}
\|{\rm III}\|_{\ell_\nu^r(L^q)}^r
&\le
\sum_{j\in{\mathbb Z}}
\left[
2^{j\nu}
\sum_{k>j+1}
\|
(
[G_\alpha\chi_{A_{k-1}\cup A_k\cup A_{k+1}}]
\ast
[|f|\chi_{A_k}]
)\chi_{A_j}
\|_{L^q}
\right]^r\\
&\lesssim
\sum_{j\in{\mathbb Z}}
\left[
2^{j\left(\nu+\frac nq\right)}
\sum_{k>j+1}
2^{k\alpha}
\min(1,2^{-kM})
\|f\chi_{A_k}\|_{L^1}
\right]^r
\\
&\le
\sum_{j\in{\mathbb Z}}
\left[
2^{j\left(\nu+\frac nq\right)}
\sum_{k>j+1}
2^{k\left(\alpha-\mu+\frac n{p'}\right)}
\min(1,2^{-kM})
\right]^r
\|f\|_{\dot{K}^\mu_{p,\infty}}^r
\sim
\|f\|_{\dot{K}^\mu_{p,\infty}}^r.
\end{align*}

\noindent
\underline{\bf Step. 2}.
Assume that $\nu\le0\le\mu$.
Choosing a suitable parameter $\mu_0$ and $p_0$ satisfying
\[
\nu\le\mu_0\le0,
\quad
p_0\le p,
\quad
\frac{\mu_0}n+\frac1{p_0}=\frac\mu n+\frac1p,
\]
we have
\[
\|G_\alpha\ast f\|_{\dot{K}^\nu_{q,r}}
\lesssim
\|f\|_{\dot{K}^{\mu_0}_{p_0,r_0}}.
\]
Then, by the embedding
$
\dot{K}^{\mu_0}_{p_0,r_0}({\mathbb R}^n)
\hookleftarrow
\dot{K}^\mu_{p,r_0}({\mathbb R}^n)
$
(see Proposition \ref{prop:Herz-fan} (6)),
\[
\|G_\alpha\ast f\|_{\dot{K}^\nu_{q,r}}
\lesssim
\|f\|_{\dot{K}^\mu_{p,r_0}}.
\]

\noindent
\underline{\bf Step. 3}.
Assume that $0\le\nu\le\mu$.
Let $\nu_0$ satisfy $\nu_0\le0$ and $\nu-\nu_0\ge0$.
Then, using the estimate
\[
|x|^{\nu-\nu_0}
\lesssim
|x-y|^{\nu-\nu_0}+|y|^{\nu-\nu_0},
\]
we can split
\begin{align*}
\|G\ast f\|_{\dot{K}^\nu_{q,r}}
&\sim
\|
|\cdot|^{\nu-\nu_0}
(G\ast f)
\|_{\dot{K}^{\nu_0}_{q,r}}\\
&\lesssim
\|
[|\cdot|^{\nu-\nu_0}G]\ast|f|
\|_{\dot{K}^{\nu_0}_{q,r}}
+
\|
G\ast[|\cdot|^{\nu-\nu_0}|f|]
\|_{\dot{K}^{\nu_0}_{q,r}}\\
&=:
{\rm I}+{\rm II}.
\end{align*}

On the estimate of ${\rm I}$, taking $\mu_1\le0$, $\tilde{\mu}_1\le0$ and $1\le p_1,\tilde{p}_1\le\infty$ such that
\[
\nu_0=\mu_1+\tilde{\mu}_1,
\quad
\frac1q+1=\frac1{p_1}+\frac1{\tilde{p}_1},
\quad
\frac{\mu_1}n+\frac1{p_1}=\frac\mu n+\frac1p,
\]
we can use the estimates obtained in Steps 1 and 2, and then, by Proposition \ref{prop:Herz-fan} (5),
\begin{align*}
{\rm I}
\lesssim
\|f\|_{\dot{K}^{\mu_1}_{p_1,r_0}}
\lesssim
\|f\|_{\dot{K}^\mu_{p,r_0}}.
\end{align*}

On the estimate of ${\rm I}$, taking $\mu_2\le0$, $\tilde{\mu}_2\le0$ and $1\le p_2,\tilde{p}_2\le\infty$ such that
\[
\nu_0=\mu_2+\tilde{\mu}_2,
\quad
\frac1q+1=\frac1{p_2}+\frac1{\tilde{p}_2},
\]
\[
p_2=p,
\quad
\mu_2+\nu-\nu_0=\mu,
\]
we can use the estimates obtained in Steps 1 and 2, and then,
\begin{align*}
{\rm II}
\lesssim
\|
|\cdot|^{\nu-\nu_0}f
\|_{\dot{K}^{\mu_2}_{p,r_0}}
\sim
\|f\|_{\dot{K}^\mu_{p,r_0}},
\end{align*}
as desired.
\end{proof}

\subsection{Meyer's inequality}

In this subsection, we gave the generalization for Meyer's inequality on the Herz spaces.

\begin{proposition}\label{prop:Meyer-Herz}
Let $n\ge3$, $\mu,\nu\in{\mathbb R}$, $1\le p,q\le\infty$ and $0<r\le\infty$.
Assume that
\[
\mu>\nu,
\quad
0<
\frac\nu n+\frac1q
<
\frac\mu n+\frac1p
\le1,
\]
\[
\frac\nu n+\frac1q
=
\left(
\frac\mu n+\frac1p
\right)
-
\frac2n.
\]
If either
\begin{itemize}
\item[{\rm (1)}] $p,q<\infty$,
\quad
$\displaystyle
\frac\mu n+\frac1p<1
$,
\quad or

\item[{\rm (2)}] $p,q\le\infty$,
\quad
$\displaystyle
\frac\mu n+\frac1p\le1
$,
\quad
$r\le1$,
\end{itemize}
then, for all $t>0$,
\[
\left\|
\int_0^t
e^{(t-\tau)\Delta}f(\tau)
\,{\rm d}\tau
\right\|_{\dot{K}^\nu_{q,\infty}}
\lesssim
\sup_{0<\tau<t}
\|f(\tau)\|_{\dot{K}^\mu_{p,r}}.
\]
\end{proposition}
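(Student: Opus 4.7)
The plan is to reduce Meyer's inequality on Herz spaces to the real-interpolation identity of Theorem \ref{thm:real-Herz} combined with the non-endpoint heat semigroup estimates of Proposition \ref{prop:LE-Herz2}, mirroring the K-functional argument that underlies Meyer's original proof on weak Lebesgue spaces. I set
\[
a_i := \frac{n}{2}\left[\left(\frac{\mu}{n}+\frac{1}{p}\right) - \left(\frac{\nu_i}{n}+\frac{1}{q}\right)\right], \qquad i = 0, 1,
\]
for two auxiliary smoothness exponents $\nu_0 < \nu < \nu_1$ to be chosen, and pick $\theta \in (0,1)$ so that $\nu = (1-\theta)\nu_0 + \theta\nu_1$. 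The scaling identity $\nu/n+1/q = \mu/n+1/p - 2/n$ forces $(1-\theta)a_0 + \theta a_1 = 1$, and by choosing $\nu_0$ just below and $\nu_1$ just above $\nu$ one can keep $a_0 > 1 > a_1 > 0$ while preserving $0 < \nu_i/n + 1/q < \mu/n + 1/p$. Taking $r_0 = r_1 = \infty$, Proposition \ref{prop:LE-Herz2} (case (2) in regime (1) of the present statement, case (3) in regime (2)) then supplies
\[
\|e^{(t-\tau)\Delta}f(\tau)\|_{\dot{K}^{\nu_i}_{q,\infty}} \lesssim (t-\tau)^{-a_i}\, M, \qquad M := \sup_{0<\tau<t}\|f(\tau)\|_{\dot{K}^\mu_{p,r}},
\]
while Theorem \ref{thm:real-Herz} identifies $\dot{K}^\nu_{q,\infty} \cong (\dot{K}^{\nu_0}_{q,\infty}, \dot{K}^{\nu_1}_{q,\infty})_{\theta,\infty}$.

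The core step is to bound the associated K-functional $K(s, v(t))$ for $v(t) := \int_0^t e^{(t-\tau)\Delta}f(\tau)\,d\tau$. For each $s>0$ such that $\rho := s^{\theta/(1-a_0)}$ lies in $(0,t)$, I split $v(t) = v_0(t) + v_1(t)$ at the temporal threshold $\tau = t - \rho$, namely
\[
v_0(t) := \int_0^{t-\rho} e^{(t-\tau)\Delta}f(\tau)\,d\tau, \qquad v_1(t) := \int_{t-\rho}^{t} e^{(t-\tau)\Delta}f(\tau)\,d\tau.
\]
Integrating the intermediate heat estimates, using $a_0 > 1$ to control the tail integral over $(0,t-\rho)$ and $a_1 < 1$ to control the singular integral over $(t-\rho,t)$, gives
\[
\|v_0(t)\|_{\dot{K}^{\nu_0}_{q,\infty}} \lesssim M \rho^{1-a_0}, \qquad \|v_1(t)\|_{\dot{K}^{\nu_1}_{q,\infty}} \lesssim M \rho^{1-a_1}.
\]
The algebraic identity $\theta(a_0-a_1) = a_0 - 1$, equivalent to $(1-\theta)a_0+\theta a_1=1$, balances the two summands so that $K(s, v(t)) \le M(\rho^{1-a_0} + s\rho^{1-a_1}) \lesssim M s^\theta$. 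For the remaining small values of $s$ where $\rho \ge t$, I use instead the trivial decomposition $v_0 = 0$, $v_1 = v(t)$, together with the global bound $\|v(t)\|_{\dot{K}^{\nu_1}_{q,\infty}} \lesssim M t^{1-a_1}$; the same exponent identity shows that $s^{1-\theta} \cdot t^{1-a_1} \lesssim 1$ throughout this range, so $s^{-\theta} K(s, v(t)) \lesssim M$ there as well. Taking the supremum over $s>0$ concludes the proof via Theorem \ref{thm:real-Herz}.

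The main technical obstacle will be the simultaneous selection of the auxiliary parameters $(\nu_0, \nu_1, r_0, r_1)$ so that both instances of Proposition \ref{prop:LE-Herz2} apply uniformly across the two hypothesis regimes (1) and (2) of the proposition; the endpoint regime (2), where $p$ or $q$ may equal $\infty$ and $\mu/n + 1/p$ may equal $1$, is precisely where the assumption $r \le 1$ is needed to access case (3) of Proposition \ref{prop:LE-Herz2}. Once this exponent bookkeeping is verified, the K-functional balance at $\rho = s^{\theta/(1-a_0)}$ and the small-$s$ check reduce to routine computations.
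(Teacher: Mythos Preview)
Your proposal is correct and follows essentially the same route as the paper: choose auxiliary exponents $\nu_0,\nu_1$ around $\nu$, invoke Proposition~\ref{prop:LE-Herz2} to get the two off-scale heat estimates, split the Duhamel integral in time, and then apply the real-interpolation identity of Theorem~\ref{thm:real-Herz} for $(\dot K^{\nu_0}_{q,\infty},\dot K^{\nu_1}_{q,\infty})_{\theta,\infty}\cong\dot K^\nu_{q,\infty}$. The only cosmetic differences are that the paper orders the pair so that $a_0<1<a_1$ (you take $a_0>1>a_1$), and the paper parametrises the $K$-functional directly by the split point $\tilde t\in(0,t)$ rather than by $s$; your explicit treatment of the range $\rho\ge t$ via the trivial decomposition $v_0=0$, $v_1=v(t)$ is in fact a shade more careful than the paper, which simply writes $\sup_{0<\tilde t<t}$ without commenting on the remaining $K$-functional parameters.
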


\begin{proof}
According to the assumption
\[
\frac\nu n+\frac1q
=
\left(
\frac\mu n+\frac1p
\right)
-
\frac2n,
\]
we can choose $\nu_0,\nu_1$ such that
\begin{align*}
0<
\theta
&:=
1
-
\frac n2
\left[
\left(
\frac\mu n+\frac1p
\right)
-
\left(
\frac{\nu_0}n+\frac1q
\right)
\right]\\
&=
2
-
\frac n2
\left[
\left(
\frac\mu n+\frac1p
\right)
-
\left(
\frac{\nu_1}n+\frac1q
\right)
\right]
<
1.
\end{align*}
And then,
\begin{align*}
&(1-\theta)\nu_0+\theta \nu_1\\
&\qquad =
(1-\theta)
\cdot n
\left[
\left(
\frac\mu n+\frac1p
\right)
-
\frac1q-\frac{2-2\theta}n
\right]
+
\theta
\cdot n
\left[
\left(
\frac\mu n+\frac1p
\right)
-
\frac1q-\frac{4-2\theta}n
\right]\\
&\qquad =
\mu+\frac np-\frac nq-2
=
\nu.
\end{align*}
If we reselect $\nu_0,\nu_1$ by $\nu$ as needed, then, for each $i=0,1$,
\[
\mu>\nu_i,
\quad
0<
\frac{\nu_i}n+\frac1q
<
\frac\mu n+\frac1p
\le1.
\]
Thus, we can use Proposition \ref{prop:LE-Herz2}, and then,
\[
\|e^{t\Delta}h\|_{\dot{K}^{\nu_i}_{q,\infty}}
\lesssim
t^{
-\frac n2
\left[
\left(
\frac\mu n+\frac1p
\right)
-
\left(
\frac{\nu_i}n+\frac1q
\right)
\right]
}
\|h\|_{\dot{K}^\mu_{p,r}}
\]
for all $h\in\dot{K}^\mu_{p,r}({\mathbb R}^n)$.

We decompose
\begin{align*}
g(t,x)
&:=
\int_0^t
e^{(t-\tau)\Delta}f(\tau,x)
\,{\rm d}\tau\\
&=
\int_0^{\tilde{t}}
e^{(t-\tau)\Delta}f(\tau,x)
\,{\rm d}\tau
+
\int_{\tilde{t}}^t
e^{(t-\tau)\Delta}f(\tau,x)
\,{\rm d}\tau\\
&=:
g_{\tilde{t}}(x)+g^{\tilde{t}}(x).
\end{align*}
We estimate
\begin{align*}
\|g_{\tilde{t}}(t)\|_{\dot{K}^{\nu_0}_{q,\infty}}
&\lesssim
\int_0^{\tilde{t}}
(t-\tau)^{
-\frac n2
\left[
\left(
\frac\mu n+\frac1p
\right)
-
\left(
\frac{\nu_0}n+\frac1q
\right)
\right]
}
\|f(\tau)\|_{\dot{K}^\mu_{p,r}}
\,{\rm d}\tau\\
&\lesssim
(t-\tilde{t})^{
-\frac n2
\left[
\left(
\frac\mu n+\frac1p
\right)
-
\left(
\frac{\nu_0}n+\frac1q
\right)
\right]
+1
}
\sup_{0<\tau<t}
\|f(\tau)\|_{\dot{K}^\mu_{p,r}}\\
&=
(t-\tilde{t})^\theta
\sup_{0<\tau<t}
\|f(\tau)\|_{\dot{K}^\mu_{p,r}},
\end{align*}
and
\begin{align*}
\|g^{\tilde{t}}(t)\|_{\dot{K}^{\nu_1}_{q,\infty}}
&\lesssim
\int_{\tilde{t}}^t
(t-\tilde{t})^{
-\frac n2
\left[
\left(
\frac\mu n+\frac1p
\right)
-
\left(
\frac{\nu_1}n+\frac1q
\right)
\right]
}
\|f(\tau)\|_{\dot{K}^\mu_{p,r}}
\,{\rm d}\tau\\
&\lesssim
(t-\tau)^{
-\frac n2
\left[
\left(
\frac\mu n+\frac1p
\right)
-
\left(
\frac{\nu_1}n+\frac1q
\right)
\right]
+1
}
\sup_{0<\tau<t}
\|f(\tau)\|_{\dot{K}^\mu_{p,r}}\\
&=
(t-\tilde{t})^{\theta-1}
\sup_{0<\tau<t}
\|f(\tau)\|_{\dot{K}^\mu_{p,r}}.
\end{align*}
Combining the estimates for $g_{\tilde{t}}(t,x)$ and $g^{\tilde{t}}(t,x)$, we have
\begin{align*}
\|g(t)\|_{
(
\dot{K}^{\nu_0}_{q,\infty},
\dot{K}^{\nu_1}_{q,\infty}
)_{\theta,\infty}
}
&\le
\sup_{0<\tilde{t}<t}
(t-\tilde{t})^{-\theta}
\left(
\|g_{\tilde{t}}(t)\|_{\dot{K}^{\nu_0}_{q,\infty}}
+
(t-\tilde{t})
\|g^{\tilde{t}}(t)\|_{\dot{K}^{\nu_1}_{q,\infty}}
\right)\\
&\lesssim
\sup_{0<\tau<t}
\|f(\tau)\|_{\dot{K}^\mu_{p,r}}.
\end{align*}
Consequently, by Theorem \ref{thm:real-Herz}, we obtain
\[
\|g(t)\|_{\dot{K}^\nu_{q,\infty}}
\lesssim
\sup_{0<\tau<t}
\|f(\tau)\|_{\dot{K}^\mu_{p,r}}.
\]
\end{proof}

\section{Proof of the main theorems}\label{s:proof}

In this section, we give proofs of Theorems \ref{main1} and \ref{main2}.
To prove these theorems, we set $\sigma\equiv\alpha s-\gamma$, and use Propositions \ref{prop:LE-Herz2} and \ref{prop:Meyer-Herz} with $(\mu,\nu,p)=(\sigma,s,q/\alpha)$.
Then, remark that
\[
\frac sn+\frac1q
\le
\frac1{q_c}
\quad \text{if and only if} \quad
\frac sn+\frac1q
\ge
\left(
\frac\sigma n+\frac\alpha q
\right)
-
\frac2n,
\]
and
\[
\frac sn+\frac1q
\le
\frac1{Q_c}
\quad \text{if and only if} \quad
\frac\sigma n+\frac\alpha q
\le
1
\]
hold.

\subsection{Proof of Theorem \ref{main1}}

Let
$
u_1,u_2
\in
L^\infty(
0,T
\,;\,
\dot{K}^s_{q,r}({\mathbb R}^n)
)
$
be solutions of \eqref{eq:HH} having the same initial data.
By Proposition \ref{prop:LE-Herz2} and Proposition \ref{prop:Herz-fan} (2),
\begin{align*}
&\|u_1(t)-u_2(t)\|_{\dot{K}^s_{q,r}}
=
\|Nu_1(t)-Nu_2(t)\|_{\dot{K}^s_{q,r}}\\
&\lesssim
\int_0^t
(t-\tau)^{
-\frac n2
\left[
\left(
\frac\sigma n+\frac\alpha q
\right)
-
\left(
\frac sn+\frac1q
\right)
\right]
}
\|
|\cdot|^\gamma
\left(
|u_1(\tau)|^{\alpha-1}u_1(\tau)
-
|u_2(\tau)|^{\alpha-1}u_2(\tau)
\right)
\|_{\dot{K}^\sigma_{q/\alpha,r_0}}
\,{\rm d}\tau\\
&\lesssim
\int_0^t
(t-\tau)^{\delta-1}
\|
(|u_1(\tau)|^{\alpha-1}+|u_2(\tau)|^{\alpha-1})
|u_1(\tau)-u_2(\tau)|
\|_{\dot{K}^{\alpha s}_{q/\alpha,r_0}}
\,{\rm d}\tau\\
&\lesssim
\int_0^t
(t-\tau)^{\delta-1}
\left(
\max_{i=1,2}
\|u_i\|_{L^\infty(0,T\,;\,\dot{K}^s_{q,\alpha r_0})}
\right)^{\alpha-1}
\|u_1(\tau)-u_2(\tau)\|_{\dot{K}^s_{q,\alpha r_0}}
\,{\rm d}\tau,
\end{align*}
where
\[
r_0\ge\dfrac r\alpha,
\quad
\delta
\equiv
1-
\frac n2
\left[
\left(\frac\sigma n+\frac\alpha q\right)
-
\left(\frac sn+\frac1q\right)
\right]
>0.
\]
Then, by Gronwall's inequality,
\[
\|u_1(t)-u_2(t)\|_{\dot{K}^s_{q,r}}
=0.
\]
Namely, $u_1(t)=u_2(t)$.

\subsection{Proof of Theorem \ref{main2}}

To give a proof of Theorem \ref{main2}, we use the following lemma.

\begin{lemma}\label{lem1}
Let $s,\tilde{s}\in{\mathbb R}$, $1\le q,\tilde{q}\le\infty$ and $0<r,\tilde{r}\le\infty$, and let
\[
\beta
\equiv
\frac n2
\left[
\left(
\frac sn+\frac1q
\right)
-
\left(
\frac{\tilde{s}}n+\frac1{\tilde{q}}
\right)
\right]
>0.
\]
Assume that
\[
s\ge\tilde{s},
\quad
0\le
\frac{\tilde{s}}n+\frac1{\tilde{q}}
\le
\frac sn+\frac1q
\le1,
\quad
\begin{cases}
r=\infty &
\text{if
$\displaystyle
\frac sn+\frac1q=0
$},
\vspace{5pt} \\
r_0=\infty &
\text{if
$\displaystyle
\frac{\tilde{s}}n+\frac1{\tilde{q}}=0
$}.
\end{cases}
\]
If either
\begin{itemize}
\item[{\rm (1)}] $s>\tilde{s}$,
\quad
$q,\tilde{q}<\infty$,
\quad
$r=\infty$,
\quad
$\displaystyle
\frac{\tilde{s}}n+\frac1{\tilde{q}}
<
\frac sn+\frac1q
<1
$,
\quad or

\item[{\rm (2)}] $q,\tilde{q}<\infty$,
\quad
$r\le\tilde{r}$,
\quad
$\displaystyle
\frac sn+\frac1q
<1
$,
\quad
or

\item[{\rm (3)}] $r\le\min(1,\tilde{r})$,
\end{itemize}
then, for any
$f\in\dot{\mathcal K}^s_{q,r}({\mathbb R}^n)$,
\[
\lim_{t\downarrow0}
t^\beta
\|e^{t\Delta}f\|_{\dot{K}^{\tilde{s}}_{\tilde{q},\tilde{r}}}
=0.
\]
\end{lemma}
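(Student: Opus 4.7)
The plan is a standard three-epsilon density argument that exploits the definition of $\dot{\mathcal K}^s_{q,r}({\mathbb R}^n)$ as the closure of $C_{\rm c}^\infty({\mathbb R}^n)$ in $\dot{K}^s_{q,r}({\mathbb R}^n)$. Given $\varepsilon>0$ and $f\in\dot{\mathcal K}^s_{q,r}({\mathbb R}^n)$, I would first pick $\varphi\in C_{\rm c}^\infty({\mathbb R}^n)$ with $\|f-\varphi\|_{\dot{K}^s_{q,r}}<\varepsilon$ and split
\[
t^\beta\|e^{t\Delta}f\|_{\dot{K}^{\tilde s}_{\tilde q,\tilde r}}
\le
t^\beta\|e^{t\Delta}(f-\varphi)\|_{\dot{K}^{\tilde s}_{\tilde q,\tilde r}}
+
t^\beta\|e^{t\Delta}\varphi\|_{\dot{K}^{\tilde s}_{\tilde q,\tilde r}}.
\]

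For the first (tail) term the key input is Proposition \ref{prop:LE-Herz2} applied with $(\mu,\nu,p)=(s,\tilde s,q)$ and $r_0=r$: the three cases (1)--(3) of the present lemma are arranged exactly to match the three cases of that proposition, so in each regime one obtains the uniform-in-$t$ bound
\[
t^\beta\|e^{t\Delta}(f-\varphi)\|_{\dot{K}^{\tilde s}_{\tilde q,\tilde r}}
\lesssim
\|f-\varphi\|_{\dot{K}^s_{q,r}}
<\varepsilon.
\]
The checking that the exponent restrictions (the boundary behaviour at $\tfrac{s}{n}+\tfrac{1}{q}=0,1$ and the various interpolation-exponent constraints on $r,r_0$) line up is the main technical verification; in particular, the appearance of $r=\infty$ in case (1), $r=\tilde r$ in case (2), and $r\le 1$ in case (3) is forced precisely by the corresponding hypotheses on $r_0$ in Proposition \ref{prop:LE-Herz2}.

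For the second (smooth) term, the goal is just a uniform bound $\|e^{t\Delta}\varphi\|_{\dot{K}^{\tilde s}_{\tilde q,\tilde r}}\lesssim C_\varphi$ on $t\in(0,1]$, since $t^\beta\to 0$ with $\beta>0$ finishes the argument. This can be obtained by another application of Proposition \ref{prop:LE-Herz2} in the ``identity'' regime $(\mu,\nu,p,q)=(\tilde s,\tilde s,\tilde q,\tilde q)$, which gives $\|e^{t\Delta}\varphi\|_{\dot{K}^{\tilde s}_{\tilde q,\tilde r}}\lesssim\|\varphi\|_{\dot{K}^{\tilde s}_{\tilde q,r_0}}$ for some $r_0\le\min(1,\tilde r)$; since $\varphi\in C_{\rm c}^\infty$ lies in every Herz space whose exponent condition from Proposition \ref{prop:well-defined}(1) is met, the right-hand side is a finite constant. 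The hypothesis $\tilde s/n+1/\tilde q\ge 0$ (with $r_0=\infty$ at equality) ensures exactly this inclusion, so no degenerate case is missed.

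Combining these two steps yields $\limsup_{t\downarrow 0}t^\beta\|e^{t\Delta}f\|_{\dot{K}^{\tilde s}_{\tilde q,\tilde r}}\lesssim\varepsilon$, and the conclusion follows by letting $\varepsilon\downarrow 0$. I expect the only real obstacle to be the bookkeeping in Step 2: making sure that each of the three branches (1)--(3) in the hypothesis of this lemma indeed falls into the right branch of Proposition \ref{prop:LE-Herz2} and that the interpolation-exponent constraint ($r_0=\infty$, $r_0\le r$, or $r_0\le\min(1,r)$) is satisfied with the natural choice $r_0=r$. Beyond that, the argument is a purely soft density/continuity reduction and involves no delicate calculation.
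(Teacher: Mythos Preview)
Your proposal is correct and matches the paper's own proof essentially line for line: the paper also picks $g\in C_{\rm c}^\infty$ with $\|f-g\|_{\dot K^s_{q,r}}<\varepsilon$, splits $e^{t\Delta}f=e^{t\Delta}(f-g)+e^{t\Delta}g$, applies Proposition~\ref{prop:LE-Herz2} to the first piece to get $t^{-\beta}\|f-g\|_{\dot K^s_{q,r}}$, and bounds the second piece by $\|g\|_{\dot K^{\tilde s}_{\tilde q,\tilde r}}$ uniformly in $t$. Your discussion of why the smooth term is uniformly bounded (via the ``identity'' case of Proposition~\ref{prop:LE-Herz2} together with $C_{\rm c}^\infty\subset\dot K^{\tilde s}_{\tilde q,r_0}$) is in fact more explicit than the paper's, which simply writes $\|e^{t\Delta}g\|_{\dot K^{\tilde s}_{\tilde q,\tilde r}}\lesssim\|g\|_{\dot K^{\tilde s}_{\tilde q,\tilde r}}$ without comment.
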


\begin{proof}
Let $f\in\dot{\mathcal K}^s_{q,r}({\mathbb R}^n)$ and $\varepsilon>0$.
Then there exists $g\in C_{\rm c}^\infty({\mathbb R}^n)$ such that
\[
\|f-g\|_{\dot{K}^s_{q,r}}
<
\varepsilon.
\]
By Lemma \ref{prop:LE-Herz2},
\begin{align*}
\|e^{t\Delta}f\|_{\dot{K}^{\tilde{s}}_{\tilde{q},\tilde{r}}}
&\lesssim
\|
e^{t\Delta}[f-g]
\|_{\dot{K}^{\tilde{s}}_{\tilde{q},\tilde{r}}}
+
\|e^{t\Delta}g\|_{\dot{K}^{\tilde{s}}_{\tilde{q},\tilde{r}}}\\
&\lesssim
t^{-\beta}
\|f-g\|_{\dot{K}^s_{q,r}}
+
\|g\|_{\dot{K}^{\tilde{s}}_{\tilde{q},\tilde{r}}}.
\end{align*}
Consequently,
\[
\lim_{t\downarrow0}
t^\beta
\|e^{t\Delta}f\|_{\dot{K}^{\tilde{s}}_{\tilde{q},\tilde{r}}}
<
\varepsilon.
\]
\end{proof}

Additionally, we use the continuity of $e^{t\Delta}$ at $t=0$ in Herz spaces.

\begin{proposition}\label{prop:time0-conti}
Let $s\in{\mathbb R}$, $1\le q\le\infty$ and $0<r\le\infty$ satisfy the either
\begin{center}
\lq\lq $q<\infty$''
\quad or \quad
\lq\lq $q=\infty$ and $r\le1$''.
\end{center}

If $f\in\dot{\mathcal K}^s_{q,r}({\mathbb R}^n)$, then,
\[
\lim_{t\downarrow0}
e^{t\Delta}f
=
f
\quad
\text{
in
$\dot{K}^s_{q,r}({\mathbb R}^n)$.
}
\]
\end{proposition}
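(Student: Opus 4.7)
The plan is a standard density argument. Given $f\in\dot{\mathcal K}^s_{q,r}({\mathbb R}^n)$ and $\varepsilon>0$, pick $g\in C_{\rm c}^\infty({\mathbb R}^n)$ with $\|f-g\|_{\dot{K}^s_{q,r}}<\varepsilon$. By the quasi-triangle inequality in $\dot{K}^s_{q,r}({\mathbb R}^n)$,
\[
\|e^{t\Delta}f-f\|_{\dot{K}^s_{q,r}}
\lesssim
\|e^{t\Delta}(f-g)\|_{\dot{K}^s_{q,r}}
+
\|e^{t\Delta}g-g\|_{\dot{K}^s_{q,r}}
+
\|f-g\|_{\dot{K}^s_{q,r}},
\]
so the third term is already $\le\varepsilon$ and the first and second terms remain to be controlled.

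The first term calls for uniform (in $t>0$) boundedness of $e^{t\Delta}$ on $\dot{K}^s_{q,r}({\mathbb R}^n)$, i.e.\ the diagonal case $(\mu,\nu,p,r_0)=(s,s,q,r)$ of Proposition~\ref{prop:LE-Herz2}, which produces the homogeneity factor $t^0=1$. Case (2) of that proposition applies whenever $q<\infty$ and $s/n+1/q<1$; case (3) (with $r_0=r$ and $r\le1$) covers the remaining situations admitted by our hypothesis, namely $q=\infty$ with $r\le1$ and $q<\infty$ with $s/n+1/q=1$ (where Proposition~\ref{prop:well-defined}~(2) already forces $r\le1$). With this uniform bound in hand, $\|e^{t\Delta}(f-g)\|_{\dot{K}^s_{q,r}}\lesssim\|f-g\|_{\dot{K}^s_{q,r}}<\varepsilon$.

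For the middle term, the cleanest route is the fundamental theorem of calculus for the heat semigroup: since $g,\Delta g\in C_{\rm c}^\infty({\mathbb R}^n)\subset\dot{K}^s_{q,r}({\mathbb R}^n)$, we have pointwise
\[
e^{t\Delta}g(x)-g(x)=\int_0^t e^{s\Delta}\Delta g(x)\,{\rm d}s.
\]
Taking the Herz norm and combining Minkowski's integral inequality with the uniform boundedness above (now applied to $\Delta g$) yields
\[
\|e^{t\Delta}g-g\|_{\dot{K}^s_{q,r}}
\lesssim
t\,\|\Delta g\|_{\dot{K}^s_{q,r}}
\longrightarrow
0
\]
as $t\downarrow0$. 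Inserting the three estimates gives $\limsup_{t\downarrow0}\|e^{t\Delta}f-f\|_{\dot{K}^s_{q,r}}\lesssim\varepsilon$, and letting $\varepsilon\downarrow0$ concludes.

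The main obstacle is the Minkowski step in the quasi-Banach regime $r<1$, where the integral Minkowski inequality is not automatic. This can be sidestepped by a direct split of the middle term: on a ball $B(0,2R)$ containing $\mathrm{supp}\,g$, the uniform convergence $\|e^{t\Delta}g-g\|_{L^\infty}\to0$ combined with $\chi_{B(0,2R)}\in\dot{K}^s_{q,r}({\mathbb R}^n)$ (a consequence of Proposition~\ref{prop:well-defined}~(1)) gives a vanishing contribution; outside $B(0,2R)$, the gaussian bound $|e^{t\Delta}g(x)|\lesssim t^{-n/2}e^{-c|x|^2/t}\|g\|_{L^1}$ forces the tail Herz norm to be $o(1)$ as $t\downarrow0$. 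A secondary technical point is the case-by-case admissibility check of parameters in Proposition~\ref{prop:LE-Herz2} in the three branches of the hypothesis, but this is routine once the dichotomy in the second paragraph is spelled out.
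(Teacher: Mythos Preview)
Your proof is essentially identical to the paper's: the paper also invokes the diagonal case of Proposition~\ref{prop:LE-Herz2} for the uniform bound, uses the identity $e^{t\Delta}g-g=-\int_0^t e^{\tau\Delta}\Delta g\,{\rm d}\tau$ for $g\in C_{\rm c}^\infty$ to obtain the estimate $\lesssim t\|\Delta g\|_{\dot{K}^s_{q,r}}$, and then runs the same three-term density split.

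Your observation about Minkowski's integral inequality in the quasi-Banach regime $r<1$ is a genuine subtlety that the paper's proof passes over in silence; the paper simply writes $\bigl\|\int_0^t e^{\tau\Delta}\Delta f\,{\rm d}\tau\bigr\|_{\dot{K}^s_{q,r}}\le Ct\|\Delta f\|_{\dot{K}^s_{q,r}}$ without comment. Your proposed workaround (splitting $e^{t\Delta}g-g$ into a near-support piece controlled by $\|e^{t\Delta}g-g\|_{L^\infty}\|\chi_{B(0,2R)}\|_{\dot{K}^s_{q,r}}$ and a Gaussian tail whose Herz norm is checked directly) is a valid and more robust substitute for the Minkowski step, so in this respect your argument is more careful than the paper's.
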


\begin{proof}
We use Proposition \ref{prop:LE-Herz2} (2) and (3) as
\[
\|e^{t\Delta}f\|_{\dot{K}^s_{q,r}}
\le
C
\|f\|_{\dot{K}^s_{q,r}}
\]
for some $C>0$.

First, we assume that $f\in C_{\rm c}^\infty({\mathbb R}^n)$.
Using the fact
\[
e^{t\Delta}f-f
=
-
\int_0^t
e^{\tau\Delta}[\Delta f]
\,{\rm d}\tau,
\]
we have
\begin{align*}
\|e^{t\Delta}f-f\|_{\dot{K}^s_{q,r}}
=
\left\|
\int_0^t
e^{\tau\Delta}[\Delta f]
\,{\rm d}\tau
\right\|_{\dot{K}^s_{q,r}}
\le
Ct
\|\Delta f\|_{\dot{K}^s_{q,r}}
\to 0,
\end{align*}
as $t\downarrow0$.

Next, let $f\in\dot{\mathcal K}^s_{q,r}({\mathbb R}^n)$ and fix $\varepsilon>0$.
Then there exists $g\in C_{\rm c}^\infty({\mathbb R}^n)$ such that
\[
\|f-g\|_{\dot{K}^s_{q,r}}
<
\varepsilon.
\]
Moreover, by the previous case, there exists $T_0>0$ such that for all $t\in(0,T_0)$,
\[
\|e^{t\Delta}g-g\|_{\dot{K}^s_{q,r}}
<
\varepsilon.
\]
Hence,
\begin{align*}
\|e^{t\Delta}f-f\|_{\dot{K}^s_{q,r}}
&\le
\|e^{t\Delta}f-e^{t\Delta}g\|_{\dot{K}^s_{q,r}}
+
\|e^{t\Delta}g-g\|_{\dot{K}^s_{q,r}}
+
\|g-f\|_{\dot{K}^s_{q,r}}\\
&<
(C+2)\varepsilon.
\end{align*}
This is the desired result.
\end{proof}

We start the proof of Theorem \ref{main2}.
Let $u_1$ and $u_2$ are solutions of \eqref{eq:HH} with the same initial data $u_0\in\dot{K}^s_{q,r_0}({\mathbb R}^n)$.

At first, we give the following nonlinear estimate for the difference of the Duhamel term.
We decompose
\begin{align*}
|Nu_1(t)-Nu_2(t)|
&\lesssim
\int_0^t
e^{(t-\tau)\Delta}
\left[
|\cdot|^\gamma
\left(
|u_1(\tau)|^{\alpha-1}+|u_2(\tau)|^{\alpha-1}
\right)
|u_1(\tau)-u_2(\tau)|
\right]
\,{\rm d}\tau\\
&\lesssim
\int_0^t
e^{(t-\tau)\Delta}
\left[
|\cdot|^\gamma
|u_1(\tau)-e^{\tau\Delta}u_0|^{\alpha-1}
|u_1(\tau)-u_2(\tau)|
\right]
\,{\rm d}\tau\\
&\qquad+
\int_0^t
e^{(t-\tau)\Delta}
\left[
|\cdot|^\gamma
|u_2(\tau)-e^{\tau\Delta}u_0|^{\alpha-1}
|u_1(\tau)-u_2(\tau)|
\right]
\,{\rm d}\tau\\
&\qquad+
\int_0^t
e^{(t-\tau)\Delta}
\left[
|\cdot|^\gamma
|e^{\tau\Delta}u_0|^{\alpha-1}
|u_1(\tau)-u_2(\tau)|
\right]
\,{\rm d}\tau\\
&=:
{\rm I}(t)+{\rm II}(t)+{\rm III}(t).
\end{align*}

On the estimate of ${\rm I}(t)$, by Proposition \ref{prop:Meyer-Herz} and Proposition \ref{prop:Herz-fan} (2),
\begin{align*}
\|{\rm I}(t)\|_{\dot{K}^s_{q,\infty}}
&\lesssim
\sup_{0<\tau<t}
\left\|
|\cdot|^\gamma
|u_1(\tau)-e^{\tau\Delta}u_0|^{\alpha-1}
|u_1(\tau)-u_2(\tau)|
\right\|_{\dot{K}^\sigma_{q/\alpha,r_0}}\\
&\le
\|u_1-e^{\tau\Delta}u_0\|_{
L^\infty(0,t\,;\,\dot{K}^s_{q,r_0(\alpha-1)})
}^{\alpha-1}
\|u_1-u_2\|_{
L^\infty(0,t\,;\,\dot{K}^s_{q,\infty})
}.
\end{align*}

Similarly, on the estimate of ${\rm II}(t)$, we have
\[
\|{\rm II}(t)\|_{\dot{K}^s_{q,\infty}}
\lesssim
\|u_2-e^{\tau\Delta}u_0\|_{
L^\infty(0,t\,;\,\dot{K}^s_{q,r_0(\alpha-1)})
}^{\alpha-1}
\|u_1-u_2\|_{
L^\infty(0,t\,;\,\dot{K}^s_{q,\infty})
}.
\]

Additionally, on the estimate of ${\rm III}(t)$, taking $\tilde{q}$ such that
\[
\frac sn+\frac1q-\frac2{n(\alpha-1)}
<
\frac sn+\frac1{\tilde{q}}
<
\frac1{q_c},
\]
we can also choose $p$ and $\beta$ such that
\[
\frac1p
=
\frac{\alpha-1}{\tilde{p}}+\frac1q,
\quad
\beta
=
\frac n2\left(\frac1q-\frac1{\tilde{q}}\right),
\quad
0<
\frac sn+\frac1q
\le
\frac\sigma n+\frac1p
<1,
\]
and
\[
-\frac n2
\left[
\left(
\frac\sigma n+\frac1p
\right)
-
\left(
\frac sn+\frac1q
\right)
\right]
>-1,
\quad
-(\alpha-1)\beta>-1.
\]
Then, by Proposition \ref{prop:LE-Herz2} and Proposition \ref{prop:Herz-fan} (2),
\begin{align*}
&\|{\rm III}(t)\|_{\dot{K}^s_{q,\infty}}\\
&\qquad \lesssim
\int_0^t
(t-\tau)^{
-\frac n2
\left[
\left(
\frac\sigma n+\frac1p
\right)
-
\left(
\frac sn+\frac1q
\right)
\right]
}
\left\|
|\cdot|^\gamma
|e^{\tau\Delta}u_0|^{\alpha-1}
|u_1(\tau)-u_2(\tau)|
\right\|_{\dot{K}^\sigma_{p,\infty}}
\,{\rm d}\tau\\
&\qquad \lesssim
\int_0^t
(t-\tau)^{
-\frac n2
\left[
\left(
\frac\sigma n+\frac1p
\right)
-
\left(
\frac sn+\frac1q
\right)
\right]
-
\frac{\sigma-s}2
}
\tau^{-(\alpha-1)\beta}
\,{\rm d}\tau\\
&\qquad\qquad \times
\left(
\sup_{0<\tau<t}\tau^\beta
\|e^{\tau\Delta}u_0\|_{\dot{K}^s_{\tilde{q},\infty}}
\right)^{\alpha-1}
\|u_1-u_2\|_{L^\infty(0,t;\dot{K}^s_{q,\infty})}\\
&\qquad \sim
t^\delta
\left(
\sup_{0<\tau<t}\tau^\beta
\|e^{\tau\Delta}u_0\|_{\dot{K}^s_{\tilde{q},\infty}}
\right)^{\alpha-1}
\|u_1-u_2\|_{L^\infty(0,t;\dot{K}^s_{q,\infty})}.
\end{align*}

Next, we consider the uniqueness of the solutions.
By the previous argument,
\begin{align*}
&\|u_1(t)-u_2(t)\|_{\dot{K}^s_{q,\infty}}
=
\|Nu_1(t)-Nu_2(t)\|_{\dot{K}^s_{q,\infty}}\\
&\qquad \lesssim
\left(
\max_{i=1,2}
\|u_i-e^{\tau\Delta}u_0\|_{
L^\infty(
0,t;
\dot{K}^s_{q,r_0(\alpha-1)}
)
}
+
\sup_{0<\tau<t}\tau^\beta
\|e^{\tau\Delta}u_0\|_{\dot{K}^s_{\tilde{q},\infty}}
\right)^{\alpha-1}\\
&\qquad \qquad \times
\|u_1-u_2\|_{L^\infty(0,t;\dot{K}^s_{q,\infty})}.
\end{align*}
Since
\begin{align*}
&\|u_i-e^{\tau\Delta}u_0\|_{
L^\infty(0,t\,;\,\dot{K}^s_{q,r_0(\alpha-1)})
}\\
&\le
\|u_i-u_0\|_{
L^\infty(0,t\,;\,\dot{K}^s_{q,r_0(\alpha-1)})
}
+
\|u_0-e^{\tau\Delta}u_0\|_{
L^\infty(0,t\,;\,\dot{K}^s_{q,r_0(\alpha-1)})
}
\to0
\end{align*}
as $t\to0$ by Proposition \ref{prop:time0-conti}, and, by Lemma \ref{lem1},
\[
\sup_{0<\tau<t}
\tau^\beta
\|
e^{\tau\Delta}u_0
\|_{\dot{K}^s_{\tilde{q},\infty}}
\to0
\]
as $t\to0$, there exists $t_0>0$ such that
\[
\left(
\max_{i=1,2}
\|u_i-e^{\tau\Delta}u_0\|_{
L^\infty(
0,t_0;
\dot{K}^s_{q,r_0(\alpha-1)}
)
}
+
\sup_{0<\tau<t_0}\tau^\beta
\|e^{\tau\Delta}u_0\|_{\dot{K}^s_{\tilde{q},\infty}}
\right)^{\alpha-1}
<1.
\]
Then, for all $t\in(0,t_0]$, $u_1(t)=u_2(t)$.
It follows that there exists $T_0\ge t_0$ such that for all $t\in(0,T_0)$, $u_1(t)=u_2(t)$.

Here, set
\[
T_0
\equiv
\sup\{
t_0>0
\,:\,
\text{
$u_1(t)=u_2(t)$
for all $t\in(0,t_0)$
}
\},
\]
and assume that $T_0<T$.
By the continuity of
\[
t\mapsto u_i(t),
\quad
i=1,2,
\]
we have $u_1(T_0)=u_2(T_0)$.
Then arguing similar to the previous argument as a initial data $u_1(T_0)=u_2(T_0)$, we see that for a sufficiently small number $\varepsilon>0$, $u_1(T_0+\varepsilon)=u_2(T_0+\varepsilon)$.
This contradicts to the maximality of $T_0$.
Therefore, $T_0=T$ holds, and we finish the proof of Theorem \ref{main2}.

\section{Comparison of our results with previous results}\label{s:rem}

In this section, to compare the results on \cite{Tayachi20,CITT24}, we consider the comparison between Herz spaces and Lorentz spaces.

\begin{definition}
Let $0<p<\infty$ and $0<r\le\infty$.
Define the Lorentz space $L^{p,r}({\mathbb R}^n)$ by the space of all measurable functions $f$ with the finite quasi-norm
\[
\|f\|_{L^{p,r}}
\equiv
\begin{cases}
\displaystyle
\left(
\int_0^\infty
\left[
t^{\frac1p}
f^\ast(t)
\right]^r
\,\frac{{\rm d}t}t
\right)^{\frac1r},
& r<\infty, \\
\displaystyle
\sup_{t>0}
t^{\frac1p}
f^\ast(t),
& r=\infty,
\end{cases}
\]
where $f^\ast(t)$ is a decreasing rearrangement of $f$ defined by
\[
f^\ast(t)
\equiv
\inf\{
\lambda>0
\,:\,
|\{x\in{\mathbb R}^n\,:\,|f(x)|>\lambda\}|
\le t
\}.
\]
Additionally, let $s\in{\mathbb R}$, and define the Lorentz space with power weight $L_s^{p,r}({\mathbb R}^n)$ by the space of all measurable functions $f$ with the finite quasi-norm
\[
\|f\|_{L_s^{p,r}}
\equiv
\||\cdot|^sf\|_{L^{p,r}}.
\]
\end{definition}

As is well known, the real interpolation spaces for the Lorentz spaces are given as follows.

\begin{theorem}
Let $0<\theta<1$, $0<p,p_0,p_1<\infty$ and $0<r,r_0,r_1\le\infty$.
If
\[
p_0\ne p_1,
\quad
\frac1p
=
\frac{1-\theta}{p_0}+\frac\theta{p_1},
\]
then,
\[
(
L^{p_0,r_0}({\mathbb R}^n),
L^{p_1,r_1}({\mathbb R}^n)
)_{\theta,r}
\cong
L^{p,r}({\mathbb R}^n).
\]
\end{theorem}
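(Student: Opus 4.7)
The plan is to reduce the interpolation quasi-norm to the Lorentz quasi-norm via Holmstedt's formula for the $K$-functional, followed by a weighted Hardy inequality. Without loss of generality assume $p_0<p_1$, and set $\alpha=(1/p_0-1/p_1)^{-1}>0$.

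First, I would establish Holmstedt's formula
\[
K(t,f)
\sim
\left(\int_0^{t^\alpha}[s^{1/p_0}f^\ast(s)]^{r_0}\,\frac{ds}{s}\right)^{1/r_0}
+
t\left(\int_{t^\alpha}^\infty[s^{1/p_1}f^\ast(s)]^{r_1}\,\frac{ds}{s}\right)^{1/r_1}
\]
(with the obvious sup modification when $r_i=\infty$). The upper bound comes from the explicit truncation $f=f_0+f_1$ with $f_0=f\chi_{\{|f|>f^\ast(t^\alpha)\}}$: since $f_0^\ast$ is supported on $(0,t^\alpha)$ and agrees there with $f^\ast$ (and dually for $f_1$ on $(t^\alpha,\infty)$), computing each Lorentz quasi-norm gives the two integrals directly. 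The matching lower bound follows from the general rearrangement inequality $(f_0+f_1)^\ast(s)\le f_0^\ast(s/2)+f_1^\ast(s/2)$ combined with the infimum over admissible decompositions.

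Next, I substitute this formula into the definition of the interpolation quasi-norm. Using quasi-subadditivity $(a+b)^r\sim a^r+b^r$ (with constant depending only on $r$ when $r<1$) and the change of variables $u=t^\alpha$, the $r$-th power of the interpolation norm reduces to the sum of two iterated integrals of the form
\[
\int_0^\infty u^{\beta_i r-1}\left(\int_0^u[s^{1/p_i}f^\ast(s)]^{r_i}\,\frac{ds}{s}\right)^{r/r_i}du,
\qquad i=0,1,
\]
where the outer exponents $\beta_i$ are forced by the scaling and match up precisely so that the identity $1/p=(1-\theta)/p_0+\theta/p_1$ produces a positive Hardy gain. Applying the weighted Hardy inequality (and its dual version with $\int_u^\infty$ in place of $\int_0^u$ for the $i=1$ term) collapses the inner integrals, producing exactly $\|f\|_{L^{p,r}}^r$.

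The main obstacles I anticipate are two technical points. First, Holmstedt's formula must be verified over the full quasi-Banach range $0<r_0,r_1\le\infty$; the truncation argument sketched above does extend, but the quasi-triangle constants need to be tracked carefully. Second, the weighted Hardy inequality for exponents below one cannot be obtained by duality and must instead be proved via a direct monotonicity argument applied to $f^\ast$, in the spirit of Calderón. Once these two points are secured, the remainder of the argument is a routine change-of-variables computation.
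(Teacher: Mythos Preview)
The paper does not actually prove this theorem. It is stated in Section~\ref{s:rem} with the preface ``As is well known,'' and is later invoked with a reference to Bergh--L\"ofstr\"om \cite{BeLo12}; no argument is supplied in the paper itself. So there is no in-paper proof to compare your proposal against.

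That said, your outline is a correct and standard route to the result. The Holmstedt-type formula you wrote for the $K$-functional of the Lorentz couple is valid in the full quasi-Banach range, and once it is in hand the computation of the $(\theta,r)$-interpolation norm does collapse, via the weighted Hardy inequalities applied to the nonincreasing function $f^\ast$, to $\|f\|_{L^{p,r}}$. The two caveats you flag---tracking quasi-triangle constants for $r_i<1$ in Holmstedt's estimate, and proving the Hardy inequality directly (by monotonicity of $f^\ast$) rather than by duality when the outer exponent is below $1$---are exactly the points that need care, and both are known to go through. An alternative, slightly shorter path (closer to what Bergh--L\"ofstr\"om do) is to first identify $(L^{p_0},L^{p_1})_{\theta,r}$ with $L^{p,r}$ via the explicit $K$-functional for the Lebesgue couple and then invoke the reiteration theorem to pass from $(L^{p_0,r_0},L^{p_1,r_1})$ to the same space; this avoids writing out Holmstedt's formula in full generality but requires reiteration in the quasi-Banach setting. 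Either approach is acceptable.
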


Then the embedding between Herz and Lorentz spaces (with power weight) is given as follows.

\begin{proposition}\label{prop:Herz-Lorentz}
Let $s,\tilde{s}\in{\mathbb R}$, $0<p<\infty$ and $0<q,r\le\infty$.
\begin{itemize}
\item[{\rm (1)}] If $s<\tilde{s}$ and $s/n+1/q=\tilde{s}/n+1/p$, then,
\[
\dot{K}^s_{q,r}({\mathbb R}^n)
\hookleftarrow
L_{\tilde{s}}^{p,r}({\mathbb R}^n).
\]

\item[{\rm (2)}] If $s<\tilde{s}$ and $s/n+1/q=\tilde{s}/n+1/p$, then,
\[
\dot{K}^s_{q,r}({\mathbb R}^n)
\hookrightarrow
L_{\tilde{s}}^{p,r}({\mathbb R}^n).
\]
\end{itemize}
\end{proposition}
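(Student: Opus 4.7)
\medskip

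My plan is to reduce the proposition to an annular decomposition argument that exploits the scaling identity $s/n+1/q=\tilde s/n+1/p$, and then to pass from the $r=\infty$ endpoints to general $r$ by real interpolation. The central observation is that on each annulus $A_j$ one has both $|x|^{\tilde s}\sim 2^{j\tilde s}$ (so that $\||\cdot|^{\tilde s}f\chi_{A_j}\|_{L^{p,r}}\sim 2^{j\tilde s}\|f\chi_{A_j}\|_{L^{p,r}}$) and $|A_j|\sim 2^{jn}$. Combining these with the scaling relation gives
\[
|A_j|^{1/q-1/p}\cdot 2^{j\tilde s}\sim 2^{js},
\]
which is exactly the correction factor needed to match the Herz weight $2^{js}$ against the weighted-Lorentz weight $2^{j\tilde s}$. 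Thus the global embedding will follow, in each case, from a purely local Lorentz embedding on $A_j$ plus a reassembly step over $j$.

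For the inclusion $L^{p,r}_{\tilde s}({\mathbb R}^n)\hookrightarrow \dot K^s_{q,r}({\mathbb R}^n)$ (the case where $q\le p$, which is forced by $s<\tilde s$ together with the scaling identity), I would first handle $r=\infty$: on each annulus apply the Lorentz inclusion $L^{p,\infty}(A_j)\hookrightarrow L^q(A_j)$ with constant $|A_j|^{1/q-1/p}\sim 2^{j(\tilde s-s)}$, obtaining
\[
2^{js}\|f\chi_{A_j}\|_{L^q}\lesssim \||\cdot|^{\tilde s}f\chi_{A_j}\|_{L^{p,\infty}(A_j)}\le \||\cdot|^{\tilde s}f\|_{L^{p,\infty}}.
\]
Taking the supremum in $j$ yields $\dot K^s_{q,\infty}\hookleftarrow L^{p,\infty}_{\tilde s}$. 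I would then upgrade to general $r$ by choosing two pairs $(s_0,p_0)$ and $(s_1,p_1)$ satisfying the same scaling relation, applying Theorem~\ref{thm:real-Herz} for the Herz side and the analogous real-interpolation identity $(L^{p_0,\infty},L^{p_1,\infty})_{\theta,r}\cong L^{p,r}$ for the Lorentz side (the weight $|\cdot|^{\tilde s}$ commutes with real interpolation since it is a pointwise multiplier). Interpolating the two endpoint embeddings then gives $L^{p,r}_{\tilde s}\hookrightarrow \dot K^s_{q,r}$.

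For the reverse inclusion (case (2), under the exponent relation that makes the opposite local Lorentz embedding available), the same plan applies, but with $L^q(A_j)\hookrightarrow L^{p,\infty}(A_j)$ used on each annulus in the opposite direction. I expect the genuinely delicate point to be the reassembly step: controlling the global weighted Lorentz norm $\||\cdot|^{\tilde s}f\|_{L^{p,r}}$ by $\ell^r$-sums of the local Lorentz norms $\||\cdot|^{\tilde s}f\chi_{A_j}\|_{L^{p,r}(A_j)}$ over disjoint annuli, since Lorentz quasi-norms are not $\sigma$-additive over disjoint supports in the way $L^p$ is. To handle this, I would again first establish the $r=\infty$ endpoint directly from rearrangement estimates on the disjoint pieces $\{A_j\}_{j\in{\mathbb Z}}$, where one merely needs a supremum over $j$, and then recover the case of general $r$ by real interpolation via Theorem~\ref{thm:real-Herz}. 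This interpolation trick is what makes the argument work uniformly in $r$ and is, in my view, the main technical hurdle.
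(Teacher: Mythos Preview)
Your approach is correct and takes a genuinely different route from the paper's. Both arguments reduce to a single endpoint in $r$ and then invoke real interpolation (Theorem~\ref{thm:real-Herz} on the Herz side, the standard Lorentz interpolation on the other side); the difference is \emph{which} endpoint. The paper works at the diagonal endpoint $r=q$: after absorbing the weight to reduce to $\tilde s=0$, it uses the identification $\dot K^{s_i}_{q,q}\cong L^q_{s_i}$ from Proposition~\ref{prop:Herz-fan}(6) together with H\"older's inequality in Lorentz spaces and the fact that $|\cdot|^{s_i}\in L^{n/|s_i|,\infty}$, obtaining for instance
\[
\|f\|_{\dot K^{s_i}_{q,q}}\sim\||\cdot|^{s_i}f\|_{L^q}\lesssim \||\cdot|^{s_i}\|_{L^{-n/s_i,\infty}}\|f\|_{L^{p_i,q}}
\]
in case (1), and the reverse in case (2). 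You instead work at $r=\infty$, using local Lorentz embeddings on the finite-measure annuli $A_j$. The paper's route is slicker because at $r=q$ the Herz space is literally a weighted $L^q$ space and no reassembly over $j$ is needed; your route, in part~(2), really does require a distribution-function computation (summing $\min(|A_j|,\lambda^{-q}\|g\chi_{A_j}\|_{L^q}^q)$ over $j$ and splitting at the crossover index), which is standard but is not ``merely a supremum'' as you wrote. On the other hand, your argument is more self-contained, avoiding the Lorentz H\"older inequality. Finally, note that the hypothesis of part~(2) as printed appears to be a typo and should read $s>\tilde s$ (equivalently $p<q$), consistent with the paper's own proof; you correctly inferred and worked under this corrected hypothesis.
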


\begin{proof}
Since
\[
\|f\|_{\dot{K}^s_{q,r}}
\sim
\||\cdot|^{\tilde{s}}f\|_{\dot{K}^{s-\tilde{s}}_{q,r}},
\quad
\|f\|_{L_{\tilde{s}}^{p,r}}
=
\||\cdot|^{\tilde{s}}f\|_{L^{p,r}},
\]
we may the only case $\tilde{s}=0$.

(1) Let $\tilde{p}_i=-n/s_i$ for each $i=0,1$.
Then, by H\"older's inequality,
\begin{align*}
\|f\|_{\dot{K}^{s_i}_{q,q}}
\sim
\||\cdot|^{s_i}f\|_{L^q}
\lesssim
\||\cdot|^{s_i}\|_{L^{\tilde{p}_i,\infty}}
\|f\|_{L^{p_i,q}}
\sim
\|f\|_{L^{p_i,q}}
\end{align*}
for each $i=0,1$.
Then, combining with the real interpolations for the Herz spaces (Theorem \ref{thm:real-Herz}) and for the Lorentz spaces (see \cite{BeLo12}),
we obtain
\[
\|f\|_{\dot{K}^s_{q,r}}
\lesssim
\|f\|_{L^{p,r}}
\]
for all $f\in L^{p,r}({\mathbb R}^n)$.

(2) Let $\tilde{p}_i=n/s_i$ for each $i=0,1$.
Then, by H\"older's inequality,
\begin{align*}
\|f\|_{L^{p_i,q}}
\lesssim
\||\cdot|^{-s_i}\|_{L^{\tilde{p}_i,\infty}}
\||\cdot|^{s_i}f\|_{L^q}
\sim
\||\cdot|^{s_i}f\|_{L^q}
\sim
\|f\|_{\dot{K}^{s_i}_{q,q}}
\end{align*}
for each $i=0,1$.
Then, combining with the real interpolations for the Herz spaces (Theorem \ref{thm:real-Herz}) and for the Lorentz spaces (see \cite{BeLo12}),
we obtain
\[
\|f\|_{L^{p,r}}
\lesssim
\|f\|_{\dot{K}^s_{q,r}}
\]
for all $f\in\dot{K}^s_{q,r}({\mathbb R}^n)$.
\end{proof}

As far as I know, the case $s=\tilde{s}$ remains unsolved.
Therefore, we confine ourselves here to presenting a concrete example that allows a partial comparison between the two function spaces $\dot{K}^s_{q,r}({\mathbb R}^n)$ and $L_s^{q,r}({\mathbb R}^n)$.

\begin{example}\label{ex:Lo-He}
Let $q<\infty$, and let
\[
E_\beta
=
\bigcup_{j=1}^\infty
\left[
(2^{j-1}+1){\bm e}_1
+
B\left(\frac1{\sqrt[n]{j}^\beta}\right)
\right],
\quad
\beta\ge0.
\]
We have
\[
|\cdot|^{-s}\chi_{E_\beta}
\in
\begin{cases}
\dot{K}^s_{q,r}({\mathbb R}^n)
& \text{if and only if
$
\begin{cases}
\beta>\dfrac qr
& \text{if $r<\infty$}, \\
\beta\ge0
& \text{if $r=\infty$}.
\end{cases}
$} \\
\dot{\mathcal K}^s_{q,\infty}({\mathbb R}^n)
& \text{if and only if $\beta>0$}, \\
L_s^{q,r}({\mathbb R}^n)
& \text{if and only if $\beta>1$}.
\end{cases}
\]
\end{example}

\appendix
\section{Proof of Lemma \ref{lem:Herz-ball}}

Since $A_j\subset B(2^j)$, we have
\[
\|f\|_{\dot{K}^s_{q,r}}
\le
\left(
\sum_{j\in{\mathbb Z}}
\left[
2^{js}
\|f\chi_{B(2^j)}\|_{L^q}
\right]^r
\right)^{\frac1r}
=:
\|f\|_{\dot{K}^s_{q,r}({\rm ball})}.
\]
Thus, we may show the opposite estimate.

When $q\ge r$, we estimate
\begin{align*}
\|f\|_{\dot{K}^s_{q,r}({\rm ball})}^r
&\le
\sum_{j\in{\mathbb Z}}
\left[
2^{js}
\|f\chi_{B(2^{j-1})}\|_{L^q}
\right]^r
+
\sum_{j\in{\mathbb Z}}
\left[
2^{js}
\|f\chi_{A_j}\|_{L^q}
\right]^r\\
&=
2^{sr}
\|f\|_{\dot{K}^s_{q,r}({\rm ball})}^r
+
\|f\|_{\dot{K}^s_{q,r}}^r,
\end{align*}
and then,
\[
\|f\|_{\dot{K}^s_{q,r}({\rm ball})}
\le
\left(\frac1{1-2^{sr}}\right)^{\frac1r}
\|f\|_{\dot{K}^s_{q,r}}.
\]

When $q<r$, we estimate
\begin{align*}
\|f\|_{\dot{K}^s_{q,r}({\rm ball})}^q
&\le
\left(
\sum_{j\in{\mathbb Z}}
\left[
2^{js}
\|f\chi_{B(2^{j-1})}\|_{L^q}
\right]^r
\right)^{\frac qr}
+
\left(
\sum_{j\in{\mathbb Z}}
\left[
2^{js}
\|f\chi_{A_j}\|_{L^q}
\right]^r
\right)^{\frac qr}\\
&=
2^{sq}
\|f\|_{\dot{K}^s_{q,r}({\rm ball})}^q
+
\|f\|_{\dot{K}^s_{q,r}}^q,
\end{align*}
and then,
\[
\|f\|_{\dot{K}^s_{q,r}({\rm ball})}
\le
\left(\frac1{1-2^{sq}}\right)^{\frac1q}
\|f\|_{\dot{K}^s_{q,r}},
\]
as desired.

\section{Proof of Proposition \ref{prop:well-defined}}

At first, we prove the case (1) of the if part.
We estimate
\begin{align*}
\int_{B(2^J)}|f(x)|\,{\rm d}x
&=
\sum_{j\le J}
\int_{A_j}|f(x)|\,{\rm d}x\\
&\lesssim
\sum_{j\le J}
2^{\frac{nj}{q'}}
\left(\int_{A_j}|f(x)|^q\,{\rm d}x\right)^{\frac1q}\\
&\le
\sum_{j\le J}
2^{-sj+\frac{nj}{q'}}
\|f\|_{\dot{K}^s_{q,\infty}}
\sim
\|f\|_{\dot{K}^s_{q,\infty}}.
\end{align*}
Then, by Proposition \ref{prop:Herz-fan} (5),
\[
\dot{K}^s_{q,r}({\mathbb R}^n)
\hookrightarrow
L_{\rm loc}^1({\mathbb R}^n)
\]
for $0<r\le\infty$.

Next, we prove the case (2) of the if part.
We estimate
\begin{align*}
\int_{B(2^J)}|f(x)|\,{\rm d}x
&=
\sum_{j\le J}
\int_{A_j}|f(x)|\,{\rm d}x\\
&\lesssim
\sum_{j\le J}
2^{\frac{nj}{q'}}
\left(\int_{A_j}|f(x)|^q\,{\rm d}x\right)^{\frac1q}\\
&\le
\sup_{j\le J}
2^{-sj+\frac{nj}{q'}}
\|f\|_{\dot{K}^s_{q,1}}
=
\|f\|_{\dot{K}^s_{q,1}}.
\end{align*}
Then, by Proposition \ref{prop:Herz-fan} (5),
\[
\dot{K}^s_{q,r}({\mathbb R}^n)
\hookrightarrow
L_{\rm loc}^1({\mathbb R}^n)
\]
for $0<r\le1$.

Finally, we prove the only if part.
Let
\[
f(x)
=
|x|^{-\alpha}
\left(\log\frac1{|x|}\right)^{-\beta}
\chi_{B(2^{-10})},
\quad
\alpha,\beta\ge0.
\]
Then,
\begin{align*}
\|f\|_{\dot{K}^s_{q,\infty}}
&=
\sup_{j\le-10}
\left[
2^{js}
\|f\chi_{A_j}\|_{L^q}
\right]\\
&\sim
\sup_{j\le-10}
\left[
2^{js-\alpha j+\frac{nj}q}
\left(\log\frac1{2^j}\right)^{-\beta}
\right]
<\infty
\end{align*}
if and only if
\[
\alpha\le s+\frac nq.
\]
Moreover,
\begin{align*}
\|f\|_{\dot{K}^s_{q,r}}^r
&=
\sum_{j\le-10}
\left[
2^{js}
\|f\chi_{A_j}\|_{L^q}
\right]^r\\
&\sim
\sum_{j\le-10}
\left[
2^{js-\alpha j+\frac{nj}q}
\left(\log\frac1{2^j}\right)^{-\beta}
\right]^r
<\infty
\end{align*}
if and only if either
\[
\alpha<s+\frac nq
\]
or
\[
\alpha=s+\frac nq,
\quad
\beta>\frac1r.
\]
Meanwhile, as is well known,
\[
f\in L^1({\mathbb R}^n)
\]
if and only if either
\[
\alpha<n
\]
or
\[
\alpha=n,
\quad
\beta>1.
\]

Therefore, assuming either
\[
\text{\lq\lq
$\dfrac sn+\dfrac1q=1$
and
$r>1$
''}
\quad \text{or} \quad
\text{\lq\lq
$\frac sn+\dfrac1q>1$
''},
\]
we can choose $\alpha=n$ and $\beta=1$, and then,
\[
f\in
\dot{K}^s_{q,r}({\mathbb R}^n)
\setminus
L^1({\mathbb R}^n).
\]
This is the desired result.

\section{Proof of Proposition \ref{prop:dense}}

At first, we assume that $q,r<\infty$.
Let $f\in\dot{K}^s_{q,r}({\mathbb R}^n)$ and $\varepsilon>0$, and let $J\in{\mathbb N}$ satisfy
\[
\left(
\sum_{j\in{\mathbb Z}\setminus[-J,J]}
\left[
2^{js}
\|f\chi_{A_j}\|_{L^q}
\right]^r
\right)^{\frac1r}
<
\varepsilon.
\]
For each $j\in{\mathbb Z}$, there exists $g_j\in C_{\rm c}^\infty({\mathbb R}^n)$ such that
\[
{\rm supp}(g_j)\subset A_j,
\quad
\|(f-g_j)\chi_{A_j}\|_{L^q}
<
2^{-(js)^2}\varepsilon.
\]
Then, setting
\[
G_J=
\sum_{j=-J}^J
g_j,
\]
we obtain
\begin{align*}
\|f-G_J\|_{\dot{K}^s_{q,r}}^r
&=
\sum_{j=-J}^J
\left[
2^{js}
\|(f-g_j)\|_{L^q}
\right]^r
+
\sum_{j\in{\mathbb Z}\setminus[-J,J]}
\left[
2^{js}
\|f\chi_{A_j}\|_{L^q}
\right]^r\\
&<
\sum_{j\in{\mathbb Z}}
\left[
2^{js-(js)^2}
\varepsilon
\right]^r
+
\varepsilon^r
\sim
\sum_{j\in{\mathbb Z}}
\left[
2^{-\left(js-\frac12\right)^2}
\varepsilon
\right]^r
+
\varepsilon^r
\sim
\varepsilon^r,
\end{align*}
that is, $C_{\rm c}^\infty({\mathbb R}^n)$ is dense in $\dot{K}^s_{q,r}({\mathbb R}^n)$.

Next, in the case $q<\infty$ and $r=\infty$, according to Example \ref{ex:Lo-He},
\[
|\cdot|^{-s}
\chi_{E_0}
\in
\dot{K}^s_{q,\infty}({\mathbb R}^n)
\setminus
\dot{\mathcal K}^s_{q,\infty}
({\mathbb R}^n).
\]
Hence, $C_{\rm c}^\infty({\mathbb R}^n)$ is not dense in $\dot{K}^s_{q,\infty}({\mathbb R}^n)$.

Finally, when $q=\infty$, we verify the claim
\[
\inf_{g\in C_{\rm c}^\infty}
\|\chi_{A_1}-g\|_{\dot{K}^s_{\infty,r}}
\ge
\frac{\min(1,2^s)}2.
\]
To show this, we consider the following 2 parts:
\begin{center}
(i) $|A_1\setminus{\rm supp}(g)|>0$,
\quad
(ii) $|A_1\setminus{\rm supp}(g)|=0$.
\end{center}

\begin{itemize}
\item[(i)] Assume that $|A_1\setminus{\rm supp}(g)|>0$.
Then
\begin{align*}
\|\chi_{A_1}-g\|_{\dot{K}^s_{\infty,r}}
\ge
2^s\|(1-g)\chi_{A_1\setminus{\rm supp}(g)}\|_{L^\infty}
=
2^s.
\end{align*}

\item[(ii)] Assume that $|A_1\setminus{\rm supp}(g)|=0$.
Then
\begin{align*}
\|\chi_{A_1}-g\|_{\dot{K}^s_{\infty,r}}
&\ge
\|\chi_{A_1}-g\|_{\dot{K}^s_{\infty,\infty}}\\
&\ge
\max\left(
\|g\chi_{A_0}\|_{L^\infty},
2^s
\|(1-g)\chi_{A_1}\|_{L^\infty}
\right)\\
&\ge
\max\left(
\sup_{x\in S^{n-1}}|g(x)|,
2^s
\left|
1-\sup_{x\in S^{n-1}}|g(x)|
\right|
\right)\\
&\ge
\min(1,2^s)
\max(\rho_0,1-\rho_0)
\ge
\frac{\min(1,2^s)}2,
\end{align*}
where $\rho_0=\sup\limits_{x\in S^{n-1}}|g(x)|$.
\end{itemize}

Therefore, the claim is proved, and it follows that $C_{\rm c}^\infty({\mathbb R}^n)$ is not dense in $\dot{K}^s_{\infty,r}({\mathbb R}^n)$.

\section{Proof of Proposition \ref{prop:Gauss}}

We prove only the case $r<\infty$.
We estimate
\begin{align*}
\|e^{-|\cdot|^2}\|_{\dot{K}^s_{q,r}}^r
&\sim
\sum_{j\in{\mathbb Z}}
\left[
2^{js}
\|e^{-4^j}\chi_{A_j}\|_{L^q}
\right]^r
\sim
\sum_{j\in{\mathbb Z}}
\left[
2^{js+\frac{nj}q}
e^{-4^j}
\right]^r\\
&=
\sum_{j=-\infty}^{-1}
+
\sum_{j=0}^\infty
=:
{\bf I}+{\bf II}.
\end{align*}

On the term ${\bf I}$, since
\[
\frac12
\le
e^{-4^j}
\le
1
\]
for all $j\le-1$, the necessary and sufficient condition of
\[
{\bf I}
\sim
\sum_{j=-\infty}^{-1}
\left[
2^{js+\frac{nj}q}
\right]^r
<
\infty
\]
is $s+n/q>0$.
On the term ${\bf II}$, replacing $k=2^j$, we have
\begin{align*}
{\bf II}
\le
\sum_{k=1}^\infty
\left[
k^{s+\frac nq}
e^{-k^2}
\right]^r
\lesssim
1.
\end{align*}
Then we finish the proof of Proposition \ref{prop:Gauss}.

\section{Proof of Theorem \ref{thm:real-Herz}}\label{App:interpolation}

Let $s\in{\mathbb R}$ and $0<r\le\infty$, let $X$ be a quasi-Banach space, and define the vector-valued space $\ell_s^r(X)$ by the space of all $F=\{f_j\}_{j\in{\mathbb Z}}\subset X$ with the finite quasi-norm
\[
\|F\|_{\ell_s^r(X)}
\equiv
\left(
\sum_{j\in{\mathbb Z}}
\left[
2^{js}\|f_j\|_X
\right]^r
\right)^{\frac1r}.
\]
Then the real interpolation spaces of these vector valued-spaces are given as follows.

\begin{theorem}\label{thm:real-vector}
Let $0<\theta<1$, $s,s_0,s_1\in{\mathbb R}$ and $0<r,r_0,r_1\le\infty$, and let $X$ be a quasi-Banach space.
If
\[
s_0\ne s_1,
\quad
s=(1-\theta)s_0+\theta s_1,
\]
then,
\[
(
\ell_{s_0}^{r_0}(X),
\ell_{s_1}^{r_1}(X)
)_{\theta,r}
\cong
\ell_s^r(X).
\]
\end{theorem}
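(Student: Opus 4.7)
The plan is to compute the $K$-functional explicitly, reducing the vector-valued problem to a scalar weighted-sequence problem, and then evaluate the real interpolation norm by discretization.

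First, I would establish that the $K$-functional of the pair $(\ell_{s_0}^{r_0}(X),\ell_{s_1}^{r_1}(X))$ coincides, up to a constant, with the scalar one applied to the sequence of norms. Set $a_j=\|f_j\|_X$. Given any nonnegative splitting $a_j=\alpha_j+\beta_j$, define $g_j=(\alpha_j/a_j)f_j$, $h_j=(\beta_j/a_j)f_j$ (with $0/0=0$); these satisfy $f_j=g_j+h_j$ in $X$ with $\|g_j\|_X=\alpha_j$ and $\|h_j\|_X=\beta_j$. Conversely, any decomposition $f_j=g_j+h_j$ in $X$ yields $a_j\le C(\|g_j\|_X+\|h_j\|_X)$ by the quasi-triangle inequality. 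Consequently
\[
K(t,F;\ell_{s_0}^{r_0}(X),\ell_{s_1}^{r_1}(X))
\;\sim\;
K(t,\{a_j\};\ell_{s_0}^{r_0},\ell_{s_1}^{r_1}).
\]

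Next, I would compute the scalar $K$-functional. Assume without loss of generality $s_0<s_1$, and let $j_t\in\mathbb{Z}$ be the unique integer with $2^{j_t(s_1-s_0)}\sim t^{-1}$. The optimal split is to put coordinates with index $\le j_t$ into the first component and the rest into the second, giving
\[
K(t,\{a_j\};\ell_{s_0}^{r_0},\ell_{s_1}^{r_1})
\;\sim\;
\biggl(\sum_{j\le j_t}[2^{js_0}a_j]^{r_0}\biggr)^{1/r_0}
+t\biggl(\sum_{j>j_t}[2^{js_1}a_j]^{r_1}\biggr)^{1/r_1},
\]
with the natural modification (supremum) when $r_0=\infty$ or $r_1=\infty$. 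The upper bound uses the cut explicitly; the lower bound uses that for any splitting one of the two weighted norms must carry the $j$-th term up to a constant depending on whether $2^{js_0}\lessgtr t\cdot 2^{js_1}$.

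Finally, I would evaluate the interpolation norm by plugging this estimate into
\[
\|F\|_{(\ell_{s_0}^{r_0}(X),\ell_{s_1}^{r_1}(X))_{\theta,r}}^r
\;\sim\;
\int_0^\infty\bigl[t^{-\theta}K(t,F)\bigr]^r\,\frac{dt}{t}.
\]
Discretize via $t\sim 2^{k(s_0-s_1)}$, so that $j_t=k$, and the integral becomes a double sum over $(k,j)$. Swapping the order of summation and applying the discrete Hardy inequalities
\[
\sum_k 2^{-k\theta r}\Bigl(\sum_{j\le k}b_j^{r_0}\Bigr)^{r/r_0}
\;\lesssim\;\sum_j 2^{-j\theta r}b_j^r,
\qquad
\sum_k 2^{k(1-\theta)r}\Bigl(\sum_{j>k}c_j^{r_1}\Bigr)^{r/r_1}
\;\lesssim\;\sum_j 2^{j(1-\theta)r}c_j^r,
\]
applied to $b_j=2^{js_0}a_j$ and $c_j=2^{js_1}a_j$, yields the equivalence with $\sum_j[2^{js}a_j]^r$, where $s=(1-\theta)s_0+\theta s_1$. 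This is exactly $\|F\|_{\ell_s^r(X)}^r$.

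The main obstacle is the quasi-Banach regime $\min(r,r_0,r_1)<1$ together with the $\infty$-endpoints. In the former, the triangle inequality defining the $K$-functional acquires a multiplicative constant (from the quasi-norm), and the discrete Hardy inequalities must be invoked in the form valid for $0<r<\infty$ (obtained, e.g., by $p$-convexification for sufficiently small $p$); in the latter, the sums in the $K$-functional and in the outer norm are to be interpreted as suprema, and the cut-at-$j_t$ argument and the Hardy step both require the obvious $r=\infty$ analogues. All constants produced depend only on $\theta$, $s_0-s_1$, and the quasi-triangle constant of $X$, so the equivalence is uniform in $F$.
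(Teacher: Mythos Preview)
The paper does not actually supply its own proof of this theorem: it cites \cite[Theorem 5.6.1\,(1)]{BeLo12} for the case $1\le r,r_0,r_1\le\infty$ and remarks in one sentence that ``the case including $0<r,r_0,r_1<1$ can also be proved by a similar argument.'' Your proposal is precisely such an argument, and it is correct; in fact it is the standard route taken in Bergh--L\"ofstr\"om (reduce the vector-valued $K$-functional to the scalar one via $a_j=\|f_j\|_X$, compute the scalar $K$-functional by a cut at the crossover index, then integrate using the discrete Hardy inequalities for geometric weights). So you are not doing anything different from what the paper intends---you are simply writing out what the paper leaves implicit.

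Two small remarks on the write-up. First, in the reduction step the passage from an arbitrary splitting $f_j=g_j+h_j$ back to a \emph{nonnegative} splitting of $a_j$ only gives $a_j\le C(\|g_j\|_X+\|h_j\|_X)$, so the scalar $K$-functional you match with is for the pair $(\ell_{s_0}^{r_0}(\mathbb{R}_{\ge0}),\ell_{s_1}^{r_1}(\mathbb{R}_{\ge0}))$ with nonnegative splittings; this is harmless since for nonnegative sequences the optimal decomposition can always be taken coordinatewise nonnegative. Second, the Hardy inequalities you quote should be the ones with a single geometric weight on the outer sum (i.e., of the form $\sum_k 2^{-k\delta r}(\sum_{j\le k}c_j)^{r}\lesssim\sum_j 2^{-j\delta r}c_j^{r}$ for $\delta>0$, and the mirror version), applied after absorbing the inner $\ell^{r_0}$ or $\ell^{r_1}$ norm via the trivial embedding $\ell^{r_i}\hookrightarrow\ell^\infty$ in one direction and the diagonal term in the other; as written, the mixed exponent $r/r_0$ inside is slightly off, but the intended estimate is correct and valid for all $0<r\le\infty$.
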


Although this theorem is stated for the case $1\le r,r_0,r_1\le\infty$ (see \cite[Theorem 5.6.1 (1)]{BeLo12}), the case including $0<r,r_0,r_1<1$ can also be proved by a similar argument.

Additionally, to prove Theorem \ref{thm:real-Herz}, we use the following results of the interpolation theory.

\begin{theorem}[{\cite[Theorem 3.7.1]{BeLo12}}]\label{thm:real-dual}
Let $0<\theta<1$ and $1\le r<\infty$, and let $X_0$ and $X_1$ be quasi-Banach spaces.
Then,
\[
(X_0,X_1)_{\theta,r}^\ast
\cong
(X_0^\ast,X_1^\ast)_{\theta,r'}.
\]
\end{theorem}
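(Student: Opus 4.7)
The plan is to reduce the duality to a direct computation by passing to the $J$-method description of real interpolation on the primal side. I would introduce the $J$-functional $J(t,f) := \max(\|f\|_{X_0}, t\|f\|_{X_1})$ for $f \in X_0 \cap X_1$, and define $(X_0, X_1)_{\theta, r; J}$ to consist of elements $f \in X_0 + X_1$ admitting a representation $f = \int_0^\infty u(t)\,dt/t$ (convergent in $X_0 + X_1$) with $u(t) \in X_0 \cap X_1$ measurable and $\int_0^\infty [t^{-\theta} J(t, u(t))]^r\, dt/t < \infty$. The classical $K$--$J$ equivalence theorem identifies $(X_0, X_1)_{\theta, r; J}$ with the $K$-space $(X_0, X_1)_{\theta, r}$ having equivalent norms, so I work with the $J$-description on the primal side and the $K$-description on the dual side.

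For the embedding $(X_0^*, X_1^*)_{\theta, r'} \hookrightarrow (X_0, X_1)_{\theta, r}^*$, given $L \in X_0^* + X_1^*$ and each $t>0$ I would pick a near-optimal $K$-decomposition $L = L_0(t) + L_1(t)$ with $\|L_0(t)\|_{X_0^*} + t\|L_1(t)\|_{X_1^*} \lesssim K(t, L; X_0^*, X_1^*)$. For $f \in X_0 \cap X_1$ with a $J$-representation $f = \int_0^\infty u(t)\,dt/t$, I would write
\[
\langle L, f\rangle = \int_0^\infty \langle L_0(t) + L_1(t), u(t)\rangle\, \frac{dt}{t},
\]
bound the integrand by $t^{-1}(\|L_0(t)\|_{X_0^*} + t\|L_1(t)\|_{X_1^*})\,J(t,u(t))$, and apply H\"older's inequality in the $L^r(dt/t)$--$L^{r'}(dt/t)$ duality to obtain $|\langle L, f\rangle| \lesssim \|L\|_{(X_0^*, X_1^*)_{\theta, r'}}\,\|f\|_{(X_0, X_1)_{\theta, r; J}}$. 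Density of $X_0 \cap X_1$ in $(X_0, X_1)_{\theta, r}$ (which is the reason $r < \infty$ matters) extends $L$ uniquely by continuity.

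For the converse $(X_0, X_1)_{\theta, r}^* \hookrightarrow (X_0^*, X_1^*)_{\theta, r'}$, I would start from $\Lambda \in (X_0, X_1)_{\theta, r}^*$, which restricts to a bounded functional on $X_0 \cap X_1$, and apply the Hahn--Banach theorem along each continuous embedding $X_0 \cap X_1 \hookrightarrow X_i$ to produce $\Lambda_i \in X_i^*$ with $\Lambda_0 + \Lambda_1 = \Lambda$ on $X_0 \cap X_1$. The task then reduces to controlling $K(t, \Lambda; X_0^*, X_1^*)$ in $L^{r'}(dt/t)$, which I would do by testing $\Lambda$ against suitably normalized elements coming from near-optimal $J$-decompositions in the primal space, so that the ratios defining $K(t,\Lambda)$ are realized as values of $\Lambda$ on explicit test elements.

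The main obstacle will be this quantitative extraction of the dual $K$-functional: it requires a careful duality pairing argument to realize the supremum and transfer norm bounds across Hahn--Banach extensions, and it is where the Banach (rather than merely quasi-Banach) structure is used essentially via Hahn--Banach. A secondary subtlety is the endpoint $r = 1$ with $r' = \infty$, which must be handled by a supremum argument in place of the $L^{r'}$ H\"older pairing used above.
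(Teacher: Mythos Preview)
The paper does not give its own proof of this statement: it is quoted verbatim from Bergh--L\"ofstr\"om \cite[Theorem~3.7.1]{BeLo12} and invoked as a black box in Step~2 of the proof of Theorem~\ref{thm:real-Herz}. So there is nothing in the paper to compare against; what you have written is essentially the classical Bergh--L\"ofstr\"om argument (pass to the $J$-method, pair $K$-decompositions of the functional against $J$-representations of the element, use H\"older in $L^r(dt/t)$, and run Hahn--Banach for the converse).

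Two remarks on your sketch. First, your pointwise bound on the integrand is slightly off: with the $K$-decomposition of $L$ taken at parameter $t$ you get $(\|L_0(t)\|_{X_0^*} + t^{-1}\|L_1(t)\|_{X_1^*})\,J(t,u(t))$, not $t^{-1}K(t,L)\,J(t,u(t))$; the clean pairing comes from decomposing $L$ at parameter $t^{-1}$ so that after the change of variable $s=t^{-1}$ the $L^{r'}(ds/s)$ norm becomes exactly $\|L\|_{(X_0^*,X_1^*)_{\theta,r'}}$. Second, you correctly flag that Hahn--Banach is the engine of the converse inclusion, which means the argument as written needs $X_0,X_1$ to be Banach (not merely quasi-Banach) spaces---indeed Bergh--L\"ofstr\"om state the theorem under that hypothesis together with density of $X_0\cap X_1$ in both $X_i$. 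The paper's phrasing with ``quasi-Banach'' is looser than what the cited reference actually proves, and your proof does not (and cannot, by this route) cover the genuinely quasi-Banach case.
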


\begin{definition}
Let $X$ and $Y$ be quasi-Banach spaces.
A bounded linear operator $R:X\to Y$ is said to be a retraction if there exists a bounded linear operator $S:Y\to X$ such that
\[
RS
=
{\rm id}_Y.
\]
We call $S$ as a coretraction and $Y$ is a retreat of $X$.
\end{definition}

\begin{theorem}[{\cite[Theorem 6.4.2]{BeLo12}}]\label{thm:real-theory}
Let $(X_0,X_1)$ and $(Y_0,Y_1)$ be interpolation couples.
Suppose that $S$ is coretraction of $(Y_i,X_i)$, $i=0,1$ and $R$ is a retraction of $(X_i,Y_i)$, $i=0,1$.
Then, $S$ is an isomorphic mapping from $(Y_0,Y_1)_{\theta,r}$ onto a complemented subspace of $(X_0,X_1)_{\theta,r}$.
\end{theorem}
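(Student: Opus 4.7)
The plan is to exploit the fact that both $S$ and $R$ are morphisms of the interpolation couples and then construct the required projection explicitly as $P := SR$. First, I would verify that $S$ and $R$ extend to bounded operators on the interpolation spaces by invoking the interpolation property of the real method. Concretely, any decomposition $f = f_0 + f_1$ with $f_i \in Y_i$ yields a decomposition $Sf = Sf_0 + Sf_1$ with $Sf_i \in X_i$, and a direct comparison of $K$-functionals gives
\[
K(t, Sf; X_0, X_1) \le C \, K(t, f; Y_0, Y_1),
\quad
C = \max\bigl(\|S\|_{Y_0 \to X_0}, \|S\|_{Y_1 \to X_1}\bigr),
\]
from which boundedness of $S : (Y_0, Y_1)_{\theta,r} \to (X_0, X_1)_{\theta,r}$ follows by taking the weighted $L^r$-norm in $t$. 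The symmetric argument yields boundedness of $R : (X_0, X_1)_{\theta,r} \to (Y_0, Y_1)_{\theta,r}$.

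Next, I would upgrade the relation $RS = \mathrm{id}_Y$ from the component spaces to the interpolation space. Since $R$ and $S$ act as bounded linear operators between $X_i$ and $Y_i$ for each $i = 0, 1$, their compositions on $X_0 + X_1$ and $Y_0 + Y_1$ make sense, and by assumption $RS = \mathrm{id}$ on $Y_0 + Y_1$. Because $(Y_0, Y_1)_{\theta,r} \hookrightarrow Y_0 + Y_1$ continuously, the identity $RSf = f$ holds for every $f \in (Y_0, Y_1)_{\theta,r}$. In particular, $S$ is injective on the interpolation space and admits a bounded left inverse, namely the restriction of $R$ to its image, so that $S$ is an isomorphism from $(Y_0, Y_1)_{\theta,r}$ onto the subspace $S\bigl((Y_0, Y_1)_{\theta,r}\bigr) \subset (X_0, X_1)_{\theta,r}$, with two-sided norm control
\[
\|f\|_{(Y_0,Y_1)_{\theta,r}}
\le
\|R\| \, \|Sf\|_{(X_0,X_1)_{\theta,r}}
\le
\|R\| \, \|S\| \, \|f\|_{(Y_0,Y_1)_{\theta,r}}.
\]

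Finally, to show the image is complemented, I would set $P := SR$, viewed as a bounded linear operator on $(X_0, X_1)_{\theta,r}$. A direct computation gives
\[
P^2 = SRSR = S(RS)R = S \cdot \mathrm{id}_{(Y_0,Y_1)_{\theta,r}} \cdot R = SR = P,
\]
so $P$ is a bounded idempotent on $(X_0, X_1)_{\theta,r}$. Its range is exactly $S\bigl((Y_0, Y_1)_{\theta,r}\bigr)$: clearly $\mathrm{Ran}(P) \subset S\bigl((Y_0,Y_1)_{\theta,r}\bigr)$, and conversely any $Sg$ satisfies $P(Sg) = S(RSg) = Sg$. Therefore the image is a complemented closed subspace, with complement $\ker P$, completing the proof.

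I expect the main obstacle to be mostly notational rather than substantive: one must keep careful track of where each operator is defined and ensure that the identity $RS = \mathrm{id}$ on $Y_0 + Y_1$ indeed transfers to the interpolation space. The only genuine content is the interpolation property for $S$ and $R$, which reduces to the standard $K$-functional comparison above; everything else is formal manipulation of the idempotent $SR$.
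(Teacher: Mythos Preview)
Your argument is correct and is precisely the standard proof of this retraction--coretraction lemma: interpolate $S$ and $R$ via the $K$-functional comparison, transfer $RS=\mathrm{id}$ to $(Y_0,Y_1)_{\theta,r}$, and exhibit $P=SR$ as a bounded idempotent whose range is $S((Y_0,Y_1)_{\theta,r})$. Note that the paper does not supply its own proof of this statement---it is quoted directly from \cite[Theorem~6.4.2]{BeLo12}---so your write-up is in effect reproducing the classical Bergh--L\"ofstr\"om argument that the paper takes for granted.
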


Here, we start the proof of Theorem \ref{thm:real-Herz}.

\noindent
\underline{\bf Step 1}.
Write
\[
Sf
=
\{(Sf)_j\}_{j\in{\mathbb Z}}
=
\{f\chi_{A_j}\}_{j\in{\mathbb Z}},
\quad
R[\{f_j\}_{j\in{\mathbb Z}}]
=
\sum_{j\in{\mathbb Z}}
f_j
\chi_{A_j}.
\]
Note that
\begin{align*}
\|f\|_{\dot{K}^s_{q,r}}^r
=
\sum_{j\in{\mathbb Z}}
\left[
2^{js}
\|f\chi_{A_j}\|_{L^q}
\right]^r
=
\sum_{j\in{\mathbb Z}}
\left[
2^{js}
\|(Sf)_j\|_{L^q}
\right]^r.
\end{align*}
Then
$S:
\dot{K}^s_{q,r}({\mathbb R}^n)
\to
\ell_s^r({\mathbb Z}\,;L^q({\mathbb R}^n))
$
is an isometry.
Meanwhile, since
\begin{align*}
\|RF\|_{\dot{K}^s_{q,r}}^r
=
\sum_{k\in{\mathbb Z}}
\left[
2^{js}
\|(RF)\chi_{A_j}\|_{L^q}
\right]^r
=
\sum_{k\in{\mathbb Z}}
\left[
2^{js}
\|f_k\chi_{A_j}\|_{L^q}
\right]^r
\end{align*}
for any
$F=
\{f_j\}_{j\in{\mathbb Z}}\in
\ell_s^r({\mathbb Z}\,;L^q({\mathbb R}^n))
$,
we see that
$\dot{K}^s_{q,r}({\mathbb R}^n)$ is a retreat of
$
\ell_s^r({\mathbb Z}\,;L^q({\mathbb R}^n))
$.
Therefore, it follows from Theorem \ref{thm:real-theory} that $S$ is an isomorphic from
$(
\dot{K}^{s_0}_{q,r_0}({\mathbb R}^n),
\dot{K}^{s_1}_{q,r_1}({\mathbb R}^n)
)_{\theta,r}$
to a subspace of
$(
\ell_{s_0}^{r_0}
({\mathbb Z}\,;L^q({\mathbb R}^n)),
\ell_{s_1}^{r_1}
({\mathbb Z}\,;L^q({\mathbb R}^n))
)_{\theta,r}$.
Hence, by Theorem \ref{thm:real-vector},
\begin{align*}
\|f\|_{(
\dot{K}^{s_0}_{q,r_0},
\dot{K}^{s_1}_{q,r_1}
)_{\theta,r}}
\sim
\|Sf\|_{(
\ell_{s_0}^{r_0}(L^q),
\ell_{s_1}^{r_1}(L^q)
)_{\theta,r}}
\sim
\|Sf\|_{\ell_s^r(L^q)}
=
\|f\|_{\dot{K}^s_{q,r}},
\end{align*}
and then,
\[
(
\dot{K}^{s_0}_{q,r_0}({\mathbb R}^n),
\dot{K}^{s_1}_{q,r_1}({\mathbb R}^n)
)_{\theta,r}
\hookrightarrow
\dot{K}^s_{q,r}({\mathbb R}^n).
\]

\noindent
\underline{\bf Step 2}.
Assume that $1<q,r\le\infty$.
Then, by the duality argument (see Theorem \ref{thm:real-dual} and Proposition \ref{prop:Herz-fan} (8)),
\begin{align*}
\dot{K}^s_{q,r}({\mathbb R}^n)
&\cong
\left(
\dot{K}^{-s}_{q',r'}({\mathbb R}^n)
\right)^\ast\\
&\hookrightarrow
(
\dot{K}^{-s_0}_{q',r_0'}({\mathbb R}^n),
\dot{K}^{-s_1}_{q',r_1'}({\mathbb R}^n)
)_{\theta,r'}^\ast
\cong
(
\dot{K}^{s_0}_{q,r_0}({\mathbb R}^n),
\dot{K}^{s_1}_{q,r_1}({\mathbb R}^n)
)_{\theta,r}.
\end{align*}

\noindent
\underline{\bf Step 3}.
Assume that $q\le1$ or $r\le1$, and let $f\in\dot{K}^s_{q,r}({\mathbb R}^n)$.
Remark that there exists $\varepsilon=\varepsilon(q,r)>0$ such that
\[
\frac q\varepsilon>1,
\quad
\frac r\varepsilon>1,
\quad
|f|^\varepsilon
\in
\dot{K}^{s\varepsilon}_{
\frac q\varepsilon,\frac r\varepsilon
}({\mathbb R}^n).
\]
Then,
\[
\||f|^\varepsilon\|_{\dot{K}^{s\varepsilon}_{
q/\varepsilon,r/\varepsilon
}}
\sim
\||f|^\varepsilon\|_{(
\dot{K}^{s_0\varepsilon}_{
q/\varepsilon,r_0/\varepsilon
},
\dot{K}^{s_1\varepsilon}_{
q/\varepsilon,r_1/\varepsilon
}
)_{\theta,r/\varepsilon}}.
\]
Since
$
|f|^\varepsilon
=
f_0+f_1
\in
\dot{K}^{s_0\varepsilon}_{
q/\varepsilon,r_0/\varepsilon
}({\mathbb R}^n)
+
\dot{K}^{s_1\varepsilon}_{
q/\varepsilon,r_1/\varepsilon
}({\mathbb R}^n)
$
implies
\[
|f|
=
(f_0+f_1)^{\frac1\varepsilon}
\lesssim
|f_0|^{\frac1\varepsilon}
+
|f_1|^{\frac1\varepsilon},
\]
we have
\begin{align*}
K(t,|f|^\varepsilon)
&=
\inf_{
|f|^\varepsilon=f_0+f_1
\in
\dot{K}^{s_0\varepsilon}_{
q/\varepsilon,r_0/\varepsilon
}
+
\dot{K}^{s_1\varepsilon}_{
q/\varepsilon,r_1/\varepsilon
}
}
\left[
\|f_0\|_{\dot{K}^{s_0\varepsilon}_{
q/\varepsilon,r_0/\varepsilon
}}
+t
\|f_1\|_{\dot{K}^{s_1\varepsilon}_{
q/\varepsilon,r_1/\varepsilon
}}
\right]\\
&=
\inf_{
|f|^\varepsilon=f_0+f_1
\in
\dot{K}^{s_0\varepsilon}_{
q/\varepsilon,r_0/\varepsilon
}
+
\dot{K}^{s_1\varepsilon}_{
q/\varepsilon,r_1/\varepsilon
}
}
\left[
\left\||f_0|^{\frac1\varepsilon}\right\|_{
\dot{K}^{s_0}_{q,r_0}
}^\varepsilon
+t
\left\||f_1|^{\frac1\varepsilon}\right\|_{
\dot{K}^{s_1}_{q,r_1}
}^\varepsilon
\right]\\
&\gtrsim
K\left(
t^{\frac1\varepsilon},f
\right)^\varepsilon.
\end{align*}
Consequently,
\begin{align*}
\|f\|_{\dot{K}^s_{q,r}}^r
&=
\||f|^\varepsilon\|_{\dot{K}^{s\varepsilon}_{
q/\varepsilon,r/\varepsilon
}}^{\frac r\varepsilon}
\sim
\||f|^\varepsilon\|_{(
\dot{K}^{s_0\varepsilon}_{
q/\varepsilon,r_0/\varepsilon
},
\dot{K}^{s_1\varepsilon}_{
q/\varepsilon,r_1/\varepsilon
}
)_{\theta,r/\varepsilon}
}^{\frac r\varepsilon}\\
&\gtrsim
\int_0^\infty
\left[
t^{-\theta}
K\left(
t^{\frac1\varepsilon},
f
\right)^\varepsilon
\right]^{\frac r\varepsilon}
\,\frac{{\rm d}t}t
\sim
\int_0^\infty
\left[
s^{-\theta\varepsilon}
K(s,f)^\varepsilon
\right]^{\frac r\varepsilon}
\,\frac{{\rm d}s}s\\
&=
\|f\|_{(
\dot{K}^{s_0}_{q,r_0},
\dot{K}^{s_1}_{q,r_1}
)_{\theta,r}}^r.
\end{align*}

{\bf Acknowledgements.}
The authors are grateful to Professor Taniguchi for pointing out an error in the assumptions of the main result and for his helpful comments, which enabled us to correct it.
The first-named author (N.H.) was supported by the Grant-in-Aid for JSPS Fellows (No. 25KJ0222).
The second author (M.I.) is supported by JSPS, the Grant-in-Aid for Scientific Research (C) (No. 23K03174) and the Grant-in-Aid for Transformative Research Areas (B) (No. 25H01453).

\bibliographystyle{amsplain}

\begin{thebibliography}{999}

\bibitem{BTW17}
B. Ben Slimene, S. Tayachi and F. B. Weissler,
Well-posedness, global existence and large time behavior for Hardy--H\'enon parabolic equations,
Nonlinear Anal. {\bf 152} (2017), 116--148.

\bibitem{BeLo12}
J. Bergh and J. L\"ofstr\"om,
Interpolation spaces: an introduction,
Grundlehren der Mathematischen Wissenschaften, No. 223, Springer-Verlag, Berlin-New York, 1976.

\bibitem{BrCa96}
H. Brezis and T. Cazenave,
A nonlinear heat equation with singular initial data,
J. Anal. Math. {\bf 68}, 277--304 (1996).

\bibitem{BuNu09}
V. I. Burenkov and E. D. Nursultanov,
Description of interpolation spaces for local Morrey-type spaces,
Tr. Mat. Inst. Steklova {\bf 269} (2010), 52--63.
Proc. Steklov Inst. Math. {\bf 269} (2010), no. 1, 46--56.

\bibitem{Chikami19}
N. Chikami,
Composition estimates and well-posedness for Hardy--H\'enon parabolic equations in Besov spaces,
J. Elliptic Parabol. Equ. {\bf 5} (2), 215--250 (2019).

\bibitem{CIT22}
N. Chikami, M. Ikeda and K. Taniguchi,
Optimal well-posedness and forward self-similar solution for the Hardy--H\'enon parabolic equation in critical weighted Lebesgue spaces,
Nonlinear Anal. {\bf 222}, 112931 (2022).

\bibitem{CITT25}
N. Chikami, M. Ikeda, K. Taniguchi, S. Tayachi,
Asymptotically self-similar global solutions for Hardy--H\'enon parabolic equations,
J. Differential Equations {\bf 466} (2026), Paper No. 114326, 70 pp.

\bibitem{CITT24}
N. Chikami, M. Ikeda, K. Taniguchi and S. Tayachi,
Unconditional uniqueness and non-uniqueness for Hardy--H\'enon parabolic equations,
Math. Ann. {\bf 390} (2024), 3765--3825.

\bibitem{CIT21}
N. Chikami, M. Ikeda and K. Taniguchi,
Well-posedness and global dynamics for the critical Hardy--Sobolev parabolic equation,
Nonlinearity {\bf 34} (2021), no. 11, 8094--8142.

\bibitem{Drihem23}
D. Drihem,
Semilinear parabolic equations in Herz spaces,
Appl. Anal. {\bf 102} (2023), no. 11, 3043--3063.

\bibitem{GGS10}
M.-H. Giga, Y. Giga and J. Saal,
Progr. Nonlinear Differential Equations,
Appl., {\bf 79} Birkhäuser Boston, Ltd., Boston, MA, (2010).

\bibitem{Henon73}
M. H\'enon,
Numerical experiments on the stability of spherical stellar systems,
Astron. Astrophys. {\bf 24} (1973) 229--238.

\bibitem{HeYa97}
E. Hern\'andez and D. Yang,
Inteporation of Herz Spaces and Applications,
Math. Nachr. (1997)
Vol. 205, Issue 1 pp. 69--87.

\bibitem{Ho18}
K.-P. Ho,
Young's inequalities and Hausdorff--Young inequalities
on Herz spaces,
Boll. Unione Mat. Ital. {\bf 11} (2018) 469--481.

\bibitem{Meyer97}
Y. Meyer,
Wavelets, paraproducts, and Navier--Stokes equations,
International Press, Boston, MA, (1997) 105--212.

\bibitem{QuSo19}
P. Quittner and P. Souplet,
Superlinear Parabolic Problems,
Blow-up, Global Existence and Steady States,
Second edition,
Birkh\"auser Adv. Texts Basler Lehrb\"ucher
Birkh\"auser/Springer, Cham, 2019.

\bibitem{Tayachi20}
S. Tayachi,
Uniqueness and non-uniqueness of solutions for critical Hardy--H\'enon parabolic equations,
J. Math. Anal. Appl. {\bf 488} (2020), no. 1.

\bibitem{Terraneo02}
E. Terraneo,
Non-uniqueness for a critical non-linear heat equation,
Comm. Partial Differential Equations {\bf 27} (2002), no. 1--2, 185--218.

\bibitem{Tsutsui11}
Y. Tsutsui,
The Navier--Stokes equations and weak Herz spaces,
Adv. Differential Equations {\bf 16} (2011), no. 11--12, 1049--1085.

\bibitem{Tsutsui25}
Y. Tsutsui,
Two-weight inequality for the heat flow and solvability of Hardy-H\'enon parabolic equation,
J. Fourier Anal. Appl. {\bf 32} (2026), no. 3, Paper No. 47, 33 pp.

\bibitem{Wang93}
X. Wang,
On the Cauchy problem for reaction-diffusion equations,
Trans. Am. Math. Soc. {\bf 337}(2), 549--590 (1993).

\bibitem{Weissler80}
F. B. Weissler,
Local existence and nonexistence for semilinear parabolic equations in $L^p$,
Indiana Univ. Math. J. {\bf 29}, 79--102 (1980).

\bibitem{Weissler81}
F. B. Weissler,
Existence and nonexistence of global solutions for a semilinear heat equation,
Israel J. Math. {\bf 38}, 29--40 (1981).

\bibitem{Yomgne22}
G. D. Yomgne,
On the generalized parabolic Hardy--H\'enon equation:
existence, blow-up, self-similarity and large-time asymptotic behavior,
Differential Integral Equations {\bf 35} (2022), no. 1--2, 57--88.

\end{thebibliography}

\end{document}